\newtheorem{theorem}{\bf Theorem}[section]
\newtheorem{prop}{\bf Proposition}[section]
\newtheorem{example}{\bf Example}[section]
\newcommand{\bmat}{\left[ \begin{matrix}}
\newcommand{\emat}{\end{matrix} \right]}
\newcommand{\Tr} {\mbox{\rm tr}}
\newcommand{\Circ}{\mathop{\rm Circ}}
\newcommand{\tp}{^{\top}}
\newcommand{\bea}{\begin{eqnarray}}
\newcommand{\eea}{\end{eqnarray}}
\newcommand{\bsea}{\begin{subeqnarray}}
\newcommand{\esea}{\end{subeqnarray}}
\newcommand{\Symmetric}{{\mathfrak S}}
\newcommand{\Gc}{{\mathcal G}}
\newcommand{\Ic}{{\mathcal I}}
\newcommand{\ct}{\tilde{c}}
\def\Jb{\bar{J}}
\newcommand{\Ebb}{{\mathbb E}\,}
\newcommand{\Rbb}{\mathbb R}
\newcommand{\yb}{\mathbf  y}
\newcommand{\Ab}{\mathbf A}
\newcommand{\Bb}{\mathbf B}
\newcommand{\Cb}{\mathbf C}
\newcommand{\Rb}{\mathbf R}
\newcommand{\Tb}{\mathbf T}
\newcommand{\Ub}{\mathbf U}
\newcommand{\Vb}{\mathbf V}
\newcommand{\Sigmab}{\boldsymbol{\Sigma}}
\newcommand{\Psib}{\boldsymbol{\Psi}}
\newcommand{\Omegab}{\boldsymbol{\Omega}}
\begin{document}

\date{December 28, 2012}
\title{An Efficient Algorithm for Maximum-Entropy Extension  of Block--Circulant Covariance Matrices}


\newcommand{\footremember}[2]{%
   \footnote{#2}
    \newcounter{#1}
    \setcounter{#1}{\value{footnote}}%
}
\newcommand{\footrecall}[1]{%
    \footnotemark[\value{#1}]%
} 
\title{An Efficient Algorithm for Maximum-Entropy Extension  of Block--Circulant Covariance Matrices}
\author{%
    Francesca P. Carli \footremember{DEI}{Department of Information Engineering, University of Padova, Italy. 
    {\tt\small  carlifra@dei.unipd.it}, {\tt\small  augusto@dei.unipd.it},  {\tt\small  picci@dei.unipd.it}}%
    \and Augusto Ferrante\footrecall{DEI}%
    \and Michele Pavon\footremember{math}{Department of Pure and Applied Mathematics, University of Padova, Italy. {\tt\small pavon@math.unipd.it}} %
    \and Giorgio Picci\footrecall{DEI} 
    \footnote{Work partially supported by the Italian Ministry for Education and Resarch (MIUR) under PRIN grant ``Identification and Adaptive Control of Industrial Systems".}%
}

\maketitle

\begin{abstract}
{This paper deals with maximum entropy completion of partially specified block--circulant matrices. 
Since positive definite symmetric circulants happen to be covariance matrices of stationary periodic processes, in particular of stationary { reciprocal processes},  this problem has applications in signal processing, in particular to image modeling.
In fact it is strictly related to maximum likelihood estimation of bilateral AR--type representations of acausal signals subject to certain conditional independence constraints.
The maximum entropy completion problem for block--circulant matrices 
has recently been solved by the authors, 
although leaving open the problem of an efficient computation of the solution. 
In this paper, we provide an effcient algorithm for computing its solution which compares very favourably with existing algorithms designed for positive definite matrix extension problems. 
The proposed algorithm benefits from the analysis of the relationship between our problem and the 
band--extension problem for block--Toeplitz matrices also developed in this paper.}
\end{abstract}

%
%
%
%



\section{Introduction} \label{sec:intoduction}
We consider the problem of completing a partially specified block--circulant matrix under the constraint that the completed matrix should be  positive definite and block-circulant  with an inverse of banded structure. As shown in  \cite{CFPP-2011}, a block--circulant completion problem of this kind is a crucial tool for the  identification of a class of reciprocal processes. 
These processes (\cite{Jamison-70}, \cite{Levy-F-K-90}, \cite{Sand-94}) are a generalization of Markov processes which are particularly useful for modeling  random signals which live in a finite region of time or of the space line, for example  images. 
In this paper we consider   stationary reciprocal processes for which we refer the reader to \cite{LEvy-Ferr-SRP,Ferrante:1998:CFS} and references therein. In particular, stationary reciprocal processes of the autoregressive type can be described by linear models involving 
 a banded block--circulant concentration matrix\footnote{i.e. the inverse covariance matrix, also known as the precision matrix. } whose blocks are the (matrix--valued) parameters of the model. 

This problem fits in the general framework of  covariance extension problems introduced by A. P. Dempster \cite{Dempster-72} 
and studied by many authors (see 
{\cite{Burg1967}}, \cite{Dym-G-81}, \cite{GJSW-84}, \cite{DMS-89}, \ \cite{SK-86}, \cite{GKW-89}, \cite{BJL-89}, 
{\cite{Barrett1993}, \cite{Barrett1996}, \cite{Johnson1996}}, \cite{Gohberg-G-K-94}, 
{\cite{Laurent1997}, \cite{Johnson1998}, \cite{Glunt1999}, \cite{Laurent2001}, \cite{Dahl-V-R-08}}, \cite{FP-2011} and references therein). 
A key discovery by Dempster is that  the  inverse of the maximum entropy completion of a partially assigned covariance matrix 
has zeros exactly in the positions corresponding to the unspecified entries in the  given   matrix, 
a property which, from now on, will be referred to as the \emph{Dempster property} 
 (an alternative, concise proof of this statement can for example be found in  \cite{CG-2011}). 

{A relevant fact is that, even \emph{when the constraint of a circulant structure is imposed}, 
the inverse of the maximum entropy completion maintains the Dempster property. 
This fact has been first noticed in \cite{CFPP-2011} for a banded structure  
and then proved in complete generality, i.e. for arbitrary given elements within a block--circulant structure, in \cite{CG-2011}.   
Otherwise stated, this means that the solution of the Maximum Entropy block--Circulant Extension Problem (CME) and of the Dempster Maximum Entropy Extension Problem (DME) with  data consistent with a block--circulant structure, coincide. 
Note that this property does not hold, for example,  
for arbitrary missing elements in a block--Toeplitz structure: if we ask the completion to be Toeplitz, the maximum entropy extension fails to satisfy the Dempster property  
unless the given data lie on consecutive bands centered along the main diagonal 
(see \cite{Dym-G-81} and \cite{Gohberg-G-K-94} for a general  formulation of  matrix  extension  problems in terms of so--called {banded--algebra} techniques and for a thorough discussion of the so--called {band--extension} problem for block--Toeplitz matrices). Moreover, the block--Toeplitz band extension problem can be solved by factorization techniques and is essentially a linear problem. This is unfortunately no longer true when a block--circulant structure is imposed \cite{Carli-P-10} to the extension.} 

{The main contribution of this paper is to
  propose a new algorithm for solving the CME problem. A straightforward application of standard optimization algorithms would be  too expensive for large sized problems  like those we have in mind for, say applications to image processing. 
  Here we propose a new procedure which rests on duality theory and exploits information
on the structure of the problem as well as the circulant structure for computing the
solution of the CME. 
\\Since the solutions of the CME and of the DME with circulant--compatible data coincide, methods available in the literature for the DME can, in principle, be employed to compute the solution of CME.
In this respect, it has been shown that if the graph associated with the specified entries is chordal (\cite{Golumbic-80}), 
the solution of the DME can be expressed in closed form in terms of the principal minors of the 
covariance matrix, see \cite{BJL-89},  \cite{FKMN-00}, \cite{NFFKM-03}. 
In our problem however the sparsity pattern associated with the given entries  is  not chordal and  the maximum entropy completion has to be computed numerically. 
A number of specialized algorithms have been proposed in the graphical models literature; see \cite{Dempster-72,SK-86,Wermuth-S-77,Kullback-68}. These algorithms deal with the general unstructured  setting of Dempster and are not especially tailored to the circulant structure. 
A detailed comparison of our procedure with the best algorithms available so far is presented in Section \ref{sec:alg_cov_sel}. 
We show that the proposed algorithm outperforms the algorithms proposed in the graphical models literature for the solution of the DME, being especially suitable to deal with very large sized instances of the problem. 
} 
    
We shall first relate our work to  the solution of the band extension problem for  block--Toeplitz matrices  and show that the maximum entropy  circulant extension approximates arbitrarily closely the block--Toeplitz band extension with the same starting data, when the dimension of the circulant extension becomes large.  This result is in the spirit of 
the relation between stationary Markov and reciprocal processes on an infinite interval established by Levy in \cite{Levy-92} 
{and will be useful to provide an efficient initialization for the proposed algorithm. }
In this context, we shall briefly touch upon feasibility of the CME problem. 
The feasibility problem for generic blocks size and bandwidth has been addressed
in \cite{CFPP-2011} and \cite{CG-2011}, where 
a \emph{sufficient} condition on the data for a positive definite \emph{block}--circulant completion to exist 
has been derived. 
Here we shall derive a \emph{necessary and sufficient} condition for feasibility of the CME problem valid 
for the scalar case with unitary bandwidth.  

The outline of the paper is as follows.  
In Section \ref{sec:notation} we introduce some notation and state the entropy maximization problem. 
In Section \ref{sec:Markov_vs_Rec} the relation between the maximum entropy extension for banded Toeplitz and banded circulant matrices is investigated. A necessary an sufficient condition for feasibility is also derived in this Section.  
In Section \ref{sec:MGD}  
we describe 
the proposed procedure for the solution of the CME problem. 
Section \ref{sec:alg_cov_sel} contains a brief review and discussion of some of the most popular methods for the solution of the DME. 
A comparison of the proposed algorithm and the methods available in the literature is presented in Section \ref{sec:num_exp}. 
Section \ref{sec:conclusion} concludes the paper.

\section{Notation and preliminaries} \label{sec:notation}

All random variables  in this paper, denoted by boldface characters, have zero mean and finite second order moments. It is shown in \cite{CFPP-2011} that a 
wide--sense stationary 
$\mathbb{R}^m$--valued  process $\yb$ is stationary 
on 
{$\{0,\, 1,\, \dots ,\,N\}$} if and only if its covariance matrix, say $\Sigmab_N$, has a block--circulant symmetric structure, i.e. $\Sigmab_N$ is of the form 
$$ 
\Sigmab_N = \bmat 
{\Sigma}_0 &{\Sigma}_1^{\top} &\ldots        & {\Sigma}_{\tau}^{\top}&\ldots             & {\Sigma}_{\tau} & \ldots &       {\Sigma}_1\\
{\Sigma}_1 & {\Sigma}_0         &{\Sigma}_1^{\top} &\ddots           & {\Sigma}_{\tau}^{\top}& \ldots       &    \ddots     &    \vdots  \\
	 \vdots   &          &       \ddots    	&	\ddots&  &  \ddots  &              &  {\Sigma}_{\tau}  \\
{\Sigma}_{\tau}& \ldots &    {\Sigma}_1   & {\Sigma}_0 &  {\Sigma}_1 ^{\top}   &  \ldots &        \ddots & \\
                   \vdots & {\Sigma}_{\tau} &      \ldots      & & {\Sigma}_0 &          & \ldots &    {\Sigma}_{\tau} ^{\top} \\                                         {\Sigma}_{\tau}^{\top}   &           &       \ddots    &        & &          &    				  & \vdots   \\
 \vdots   &  \ddots  &      &            \ddots       &       &	\ddots	&  \ddots & {\Sigma}_1^{\top}   \\
{\Sigma}_1^{\top} &\ldots  & {\Sigma}_{\tau}^{\top}&\ldots  & {\Sigma}_{\tau} & & {\Sigma}_1  & {\Sigma}_0 \emat 
$$
where the $k$--th block, $\Sigma_k$, is given by $\Sigma_k = \Ebb \yb(t+k)\yb(t)^{\top}$.
We refer the reader to \cite{Davis-79} for an introduction to circulants; an extension of some relevant results 
for the \emph{block}--case can be found, for example, in \cite{CG-2011}. 
Here we just recall that the class of block--circulants is closed under sum, product, inverse and transpose. 
Moreover, all block--circulants 
are simultaneously diagonalized by the Fourier 
{block--}matrix of suitable size ({see \eqref{eqn:diag_circ}--\eqref{eqn:Psi_N} below}). 
 
The {\em differential entropy} $H(p)$ of a probability density function $p$ on $\Rbb^n$ is defined by
\begin{equation}\label{DiffEntropy}
H(p)=-\int_{\Rbb^n}\log (p(x))p(x)dx.
\end{equation}
In  case of a zero-mean Gaussian distribution $p$ with covariance matrix $\Sigmab_N$, it results 
\begin{equation}\label{gaussianentropy}
 H(p)=\frac{1}{2}\log(\det\Sigmab_N)+\frac{1}{2}n\left(1+\log(2\pi)\right).
\end{equation}
Let $\Symmetric_{N}$ denote the vector space of real {\em symmetric} matrices with $N\times N$ square blocks of dimension $m\times m$. Moreover, let ${\Ub}_N $ denote the block--circulant shift matrix with $N\times N$ blocks,
$$ 
{\Ub}_N =\bmat
0&I_m&0&\dots&0\\
0&0&I_m&\dots&0\\
\vdots&\vdots&
&\ddots&\vdots\\
       0&0&0&\dots&I_m\\
       I_m&0&0&\dots&0
       \emat \,, 
$$
$E_n$ the $N\times (n+1)$ block matrix
$$
E_n=
\bmat 
I_m & 0 & \dots & 0 \\
0 & I_m & & 0\\ 
\vdots &  & \ddots & \vdots \\
0& \dots & \dots & I_m\\
0& \dots&  & 0\\
\vdots & \ddots & & \vdots\\
0& & \dots & 0  \emat.
$$
and $\Tb_n \in \Symmetric_{n+1}$ 
the block--Toeplitz matrix  made of the first $n+1$, $m\times m$ covariance lags $\{\Sigma_0, \Sigma_1, \dots, \Sigma_n\}$, 
\begin{equation}\label{ToeplInitData}
\Tb_n:=\bmat 
\Sigma_{0}&\Sigma_{1}\tp&\dots & \dots &\Sigma_{n}\tp \\
\Sigma_{1}& \Sigma_0 & \Sigma_1^\top &  & \vdots\\
\vdots & \ddots & \ddots & \ddots & \vdots \\
\vdots &  & \ddots & \ddots & \Sigma_1^\top \\
\Sigma_{n}& \dots & \dots & \Sigma_1 &\Sigma_{0} \emat\,.
\end{equation}
The symmetric block--Toeplitz matrix $\Tb_n$ is completely specified by its first block--row, so, with obvious notation, it will be also denoted as 
$$\Tb_n = \text{\rm Toepl} \left(\Sigma_0, \Sigma_1^\top, \dots, \Sigma_n^\top\right).$$

The maximum entropy covariance extension problem for block--circulant matrices (CME) can be stated as follows. 
\begin{subequations}\label{MaxEntProbl}
\begin{eqnarray}
&&\max \left\{\log\det\Sigmab_N \mid \Sigmab_N \in\Symmetric_{N},\; \Sigmab_N >0\right\}\\&&{\rm subject \;to:}\nonumber\\&&E_n^{\top} \Sigmab_N  E_n=\Tb_n,\label{c1}\\&&{\Ub}_N \tp \Sigmab_N {\Ub}_N =\Sigmab_N. \label{c2}
\end{eqnarray}
\end{subequations}
where we have exploited the fact that a matrix $\Cb_{N}$ with $N \times N$ blocks is block--circulant if and only if 
it commutes with $ {\Ub}_N$, namely if and ony if $\Cb_{N} = {\Ub}_N \tp \Cb_{N}{\Ub}_N$. 
Problem \eqref{MaxEntProbl} is a \emph{convex optimization problem} since we are minimizing a strictly convex function on the intersection of a convex cone (minus the zero matrix) with a linear manifold. 
If we do not impose the completion to be block--circulant, we obtain 
the 
covariance selection problem studied by A. P. Dempster (DME) in \cite{Dempster-72}. 

Notice that, although in Problem \ref{MaxEntProbl} we are maximizing the entropy functional over zero--mean \emph{Gaussian} densities, 
we are {\em not} actually restricting ourselves to the case of Gaussian distributions. 
Indeed, the Gaussian distribution with (zero mean and) covariance matrix solving \eqref{MaxEntProbl} 
maximizes the entropy functional \eqref{DiffEntropy} over the larger family of  (zero mean) probability densities whose covariance matrix satisfies the boundary conditions \eqref{c1}, \eqref{c2}, see \cite[Theorem 7.2]{CFPP-2011}.



\section{Relation with the block-Toeplitz covariance extension problem }
\label{sec:Markov_vs_Rec}

In this Section, we shall point out a relation between the solutions of the maximum entropy band extension problem for block--circulant and block--Toeplitz matrices.  
\\
{Let $\Tb_n$ and $E_n$ be the block matrices defined in Section \ref{sec:notation}. Moreover let $\Ab_{N-1}$ and $\Bb_{N-1}$ be the $(N-1)\times N$ block shift matrices 
$$
\Ab_{N-1}= \bmat 
I_m & 0 & 0 & \dots & 0 \;&0 \,\\
0 & I_m & 0 & & 0\; & 0 \,\\
0 & 0 & I_m & & 0 \;& 0\\
\vdots &  &  & \ddots &  \vdots\; & \vdots \\
0 & 0 & 0 & \dots & I_m & 0
\emat \,, \;\;
\Bb_{N-1} = \bmat
0 & I_m & 0 & 0 & \dots & 0 \\
0 & 0& I_m & 0 &  & 0\\
0 & 0 & 0 & I_m & \dots & 0 \\
\vdots &  &  &  & \ddots & \vdots \\
0 & 0 & 0 & 0& \dots & I_m 
 \emat
$$
The maximum entropy band extension problem for block--Toeplitz matrices (TME) can be stated  as follows.
\begin{subequations}\label{TMEP}
\begin{eqnarray}
&&\max \left\{\log\det\Sigmab_N \mid \Sigmab_N \in\Symmetric_{N},\; \Sigmab_N >0\right\}
\\&&{\rm subject \;to:}\nonumber
\\&&E_n^{\top} \Sigmab_N  E_n=\Tb_n,\label{c1_TMEP}
\\&&{\Ab}_{N-1} \Sigmab_N {\Ab}_{N-1}^\top = \Bb_{N-1} \Sigmab_N \Bb_{N-1}^\top. \label{c2_TMEP}
\end{eqnarray}
\end{subequations}}
This  problem   has a long history, and was probably the first 
matrix completion problem studied in the literature (\cite{Dym-G-81}, \cite{Gohberg-G-K-94}). 
As mentioned in the Introduction, it can be solved by factorization techniques, in fact, by the celebrated Levinson--Whittle algorithm  \cite{Whittle-63}  and is essentially a linear problem. Below we shall show that  for $N\to \infty$, the solution of the CME problem can be approximated arbitrarly closely  in terms of the  solution of the Toeplitz band extension problem. 
The Theorem reads as follows. 

\begin{theorem} \label{thm:link_TMEExt_CMEExt}
{\em  Let $\Tb_n$ be positive definite and 
let $\{\hat{\Sigma}_{k},\,k=n+1,n+2,\ldots\}$ 
be the maximum entropy 
block--\emph{Toeplitz} extension of $\{\Sigma_0, \Sigma_1, \dots, \Sigma_n\}$ solution of the  
TME problem \ref{TMEP}. 
Then, for $N$ large enough, the symmetric block--circulant matrix $\Sigmab_N^{(c)}$ given by   
\begin{equation}\label{eqn:compl_circ_ME_da_Toepl_even}
{\rm Toepl} \left({\Sigma}_0, {\Sigma}_1\tp, \dots, {\Sigma}_n\tp, \hat{\Sigma}_{n+1}\tp, \dots, \hat{\Sigma}_{\frac{N}{2}-1}\tp ,\hat{\Sigma}_{\frac{N}{2}}\tp+\hat{\Sigma}_{\frac{N}{2}},\hat{\Sigma}_{\frac{N}{2}-1},\dots,\hat{\Sigma}_{n+1}, {\Sigma}_n, \dots {\Sigma}_1\right),
\end{equation} 
for $N$ even, and 
\begin{equation}\label{eqn:compl_circ_ME_da_Toepl_odd}
{\rm Toepl} \left({\Sigma}_0, {\Sigma}_1\tp, \dots, {\Sigma}_n\tp,\hat{\Sigma}_{n+1}\tp, \dots, \hat{\Sigma}_{\frac{N-1}{2}}\tp,\hat{\Sigma}_{\frac{N-1}{2}}, \dots,  \hat{\Sigma}_{n+1},{\Sigma}_{n},\dots, {\Sigma}_1\right), 
\end{equation}
for $N$ odd, is a covariance matrix which for $N\to \infty$  is arbitrarily close  to the  $mN\times mN$  maximum entropy  block--\emph{circulant} extension of $\Tb_n$ solution of the CME \ref{MaxEntProbl}.}
\end{theorem}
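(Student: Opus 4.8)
The plan is to work in the Fourier domain, where both the Toeplitz and the circulant structures become tractable, and to show that the spectral densities (or rather, the finite sets of eigenvalue-blocks) of the two maximum-entropy extensions converge. First I would recall the known structure of the solutions of the two problems. The TME solution has a banded inverse: the maximum-entropy block-Toeplitz extension of $\Tb_n$ is characterized by the fact that $(\Sigmab^{\rm Toepl})^{-1}$ vanishes outside the central band of block-width $n$, equivalently the process admits a bilateral $AR(n)$ (reciprocal) model whose spectral density $\Phi(\theta)$ is the inverse of a trigonometric matrix polynomial $M(\theta)=M_0+\sum_{k=1}^n(M_k e^{-ik\theta}+M_k\tp e^{ik\theta})$, with the $M_k$ determined by the interpolation conditions on $\Sigma_0,\dots,\Sigma_n$ (Levinson--Whittle). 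Dually, by the Dempster property recalled in the Introduction — which, crucially, persists under the circulant constraint (\cite{CFPP-2011}, \cite{CG-2011}) — the CME solution $\Sigmab_N^{(c)}$ also has a banded-circulant inverse: $(\Sigmab_N^{(c)})^{-1}=\Circ(M_0^{(N)},M_1^{(N)},\dots,M_n^{(N)},0,\dots,0,M_n^{(N)\top},\dots,M_1^{(N)\top})$ for some blocks $M_k^{(N)}$ again fixed by the same $n+1$ interpolation conditions, now read modulo $N$.

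Next I would exhibit the candidate matrix \eqref{eqn:compl_circ_ME_da_Toepl_even}--\eqref{eqn:compl_circ_ME_da_Toepl_odd} as the natural "periodization" of the Toeplitz extension and show it is asymptotically the CME solution. The key computation is the eigenvalue-block of a banded circulant: if $\Cb_N=\Circ(M_0,M_1,\dots,M_n,0,\dots,0,M_n\tp,\dots,M_1\tp)$ then $\Psib_N^*\Cb_N\Psib_N$ is block-diagonal with $j$-th block $M(2\pi j/N)$, exactly the same symbol $M$ evaluated on the $N$-th roots of unity. Hence, if the circulant concentration matrix uses the $M_k$ of the Toeplitz model, its inverse has eigenvalue-blocks $\Phi(2\pi j/N)=M(2\pi j/N)^{-1}$, and one checks that the first $n+1$ block-Fourier-coefficients of this sequence, i.e. $\frac1N\sum_j \Phi(2\pi j/N)e^{i k 2\pi j/N}$, are Riemann sums for $\frac1{2\pi}\int_{-\pi}^{\pi}\Phi(\theta)e^{ik\theta}d\theta=\hat\Sigma_k$ (with $\hat\Sigma_k=\Sigma_k$ for $k\le n$). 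So the circulant built from the true $M_k$ satisfies the interpolation conditions only up to an $O(1/N)$ error (or exponentially small error, since $\Phi$ is analytic on the torus, $M$ being positive definite there); conversely the exact CME solution corresponds to perturbed blocks $M_k^{(N)}=M_k+O(1/N)$. Pushing this through, the entries of $\Sigmab_N^{(c)}$ in \eqref{eqn:compl_circ_ME_da_Toepl_even} — which are precisely the truncated-and-folded Fourier coefficients $\hat\Sigma_k$ of $\Phi$ — differ from those of the exact CME solution by a vanishing amount, and the folding $\hat\Sigma_{N/2}+\hat\Sigma_{N/2}\tp$ in the even case is exactly what makes the symbol real and the circulant symmetric. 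Positive definiteness of the candidate for $N$ large follows because its eigenvalue-blocks are $M(2\pi j/N)^{-1}+o(1)$ and $M$ is uniformly positive definite on the torus.

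The main obstacle, and the step I would spend the most care on, is making the convergence statement precise and uniform: one must control $\|M_k^{(N)}-M_k\|$ (equivalently $\|\Sigmab_N^{(c)}-\Sigmab^{(c),{\rm exact}}_N\|$) by relating the two sets of interpolation equations. The clean way is to argue at the level of symbols: the exact CME concentration symbol $M^{(N)}(\theta)$ and the Toeplitz symbol $M(\theta)$ both solve a moment problem — match $\Sigma_0,\dots,\Sigma_n$ — the former with integrals replaced by $N$-point quadrature on roots of unity; since the map from the finitely many coefficients $(M_0,\dots,M_n)$ to the moments $(\Sigma_0,\dots,\Sigma_n)$ is smooth with invertible Jacobian at the Toeplitz solution (this is essentially nonsingularity of the Levinson normal equations, guaranteed by $\Tb_n>0$), an implicit-function / quantitative-continuity argument gives $M_k^{(N)}\to M_k$ as the quadrature error goes to zero. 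A subtlety to flag is that one needs $N$ large enough that $2n<N$ so that the band of the circulant concentration matrix does not "wrap around" and collide with itself — this is the "$N$ large enough" hypothesis and it is what allows the folded form \eqref{eqn:compl_circ_ME_da_Toepl_even} to have well-separated bands. Once the symbols converge uniformly, convergence of every block entry, of the determinant, and of positive-definiteness for large $N$ all follow immediately, completing the proof.
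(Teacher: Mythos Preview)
Your approach is valid but takes a genuinely different route from the paper. You work from the exact CME solution $\Sigmab_N^o$ outward: you parametrize it by its unknown concentration blocks $M_k^{(N)}$ and argue, via an implicit-function argument on the moment map $(M_0,\dots,M_n)\mapsto(\Sigma_0,\dots,\Sigma_n)$, that $M_k^{(N)}\to M_k$ because the $N$-point quadrature on the roots of unity converges to the integral defining the Toeplitz moments. The paper instead works from the candidate $\Sigmab_N^{(c)}$ inward and avoids the implicit-function machinery entirely. The key observation it exploits is that $\Sigmab_N^{(c)}$, by the very folding construction, is \emph{exactly} feasible for the CME (it matches $\Sigma_0,\dots,\Sigma_n$ on the nose and is circulant), so by Dempster uniqueness the whole claim reduces to showing that $(\Sigmab_N^{(c)})^{-1}$ becomes banded. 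This is done by one direct spectral identity: the eigenvalue-blocks $\Psi_\ell$ of $\Sigmab_N^{(c)}$ equal $\Phi(e^{i\vartheta_\ell})$ minus an exponentially small tail $\Delta\Phi_N(e^{i\vartheta_\ell})+\Delta\Phi_N^\ast(e^{i\vartheta_\ell})$ (the folding is precisely a truncated Laurent expansion of $\Phi$), hence $\Psi_\ell^{-1}\to\Phi(e^{i\vartheta_\ell})^{-1}$, which is a trigonometric polynomial of degree $n$, i.e.\ the DFT of a banded sequence.

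What each approach buys: the paper's is shorter and purely computational---no auxiliary circulant $\tilde{\Sigmab}_N$, no Jacobian to verify, no Riemann-sum error to quantify. Your route, by establishing $M_k^{(N)}\to M_k$, actually proves a slightly stronger statement (convergence of the concentration coefficients themselves) and would adapt more naturally if one wanted explicit rates or stability under perturbation of the data; but to complete it you still owe two details you currently wave past: the invertibility of the moment-map Jacobian at the Toeplitz solution (your appeal to ``nonsingularity of the Levinson normal equations'' is plausible but needs a line of justification), and the aliasing/Poisson-summation identity that makes the inverse DFT of the samples $\Phi(e^{i\vartheta_j})$ equal, up to exponentially small wrap-around terms, to the specific folded sequence in \eqref{eqn:compl_circ_ME_da_Toepl_even}--\eqref{eqn:compl_circ_ME_da_Toepl_odd}.
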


\begin{proof}
That $\Sigmab_N^{(c)}$ is a valid covariance matrix for $N$ large enough follows from \cite[Theorem 5.1]{CFPP-2011}. 
{It remains to show that $\Sigmab_N^{(c)}$ given by \eqref{eqn:compl_circ_ME_da_Toepl_even}, \eqref{eqn:compl_circ_ME_da_Toepl_odd} tends to the maximum entropy block--circulant extension of $\Tb_n$, say $\Sigmab_N^o$, i.e. that 
\begin{equation}\label{eqn:lim}
\lim_{N \rightarrow \infty} \left\| \Sigmab_N^{(c)} - \Sigmab_N^o\right\| = 0\,.
\end{equation}
To this aim, we recall that the maximum entropy completion $\Sigmab_N^{(c)}$ is the unique completion of the given data whose inverse has the property to be zero in the complementary positions of those assigned (\cite{CFPP-2011}, \cite{CG-2011}, \cite{Dempster-72}). 
Thus, \eqref{eqn:lim} holds if and only if for $N\to \infty$ the inverse of $\Sigmab_N^{(c)}$ tends to be banded block--circulant 
$$
\lim_{N \rightarrow \infty} \left\| \left(\Sigmab_N^{(c)}\right)^{-1} - \left(\Sigmab_N^o\right)^{-1}\right\| = 0 \,,
$$
i.e. if and only if its off--diagonal blocks tend uniformly to zero (faster than $N^2$).}

{To show this,} recall that 
$\Sigmab_N^{(c)}$ can be {block}--diagonalized as 
\begin{equation}\label{eqn:diag_circ}
\Sigmab_N^{(c)} = \Vb \Psib_N \Vb^\ast
\end{equation}
where 
$\Vb$ is the Fourier block-matrix whose $(k,l)$-th block is 
\begin{equation}\label{eqn:Fourier_matrix}
V_{kl}:=1/\sqrt{N} \exp\left[-{ {\rm j}}2\pi(k-1)(l-1)/N\right]I_m
\end{equation}
and $\Psib_N$ is the block--diagonal matrix 
\begin{equation}\label{eqn:Psi_N}
\Psib_N:={\rm diag}\left(\Psi_0,\Psi_1,\dots,\Psi_{N-1}\right),
\end{equation}
whose diagonal blocks $\Psi_\ell$, are the coefficients of the finite Fourier transform of the first block row of $\Sigmab^{(c)}_N$  
\begin{equation}\label{eqn:Psi_ell}
\Psi_\ell=\hat{\Sigma}_0+{\rm e}^{{\rm j}\vartheta_\ell}\hat{\Sigma}_1\tp+\left({\rm e}^{{\rm j}\vartheta_\ell}\right)^2\hat{\Sigma}_2\tp+\dots +\left({\rm e}^{{\rm j}\vartheta_\ell}\right)^{N-2}\hat{\Sigma}_2+\left({\rm e}^{{\rm j}\vartheta_\ell}\right)^{N-1}\hat{\Sigma}_1,
\end{equation}
with $\vartheta_\ell :=- 2\pi\ell/N$. 
Thus in particular 
$$
\left(\Sigmab_N^{(c)}\right)^{-1} = \Vb \Psib_N^{-1} \Vb^\ast
$$
where 
$$
\Psib_N^{-1}:={\rm diag}\left(\Psi_0^{-1},\Psi_1^{-1},\dots,\Psi_{N-1}^{-1}\right).
$$
Now, 
{let us consider the block--Toeplitz band extension of the given data $\Tb_n$, $\{\hat{\Sigma}_{k},\,k=0,1,2,\ldots\}$, and the associated spectral density matrix}  
\begin{equation}\label{eqn:spectral_density}
\Phi(z) := \hat{\Sigma}_0 + \sum_{i=1}^\infty \hat{\Sigma}_i z^{-i} + \left(\sum_{i=1}^\infty \hat{\Sigma}_i z^{-i}\right)^*\,.
\end{equation}
It is well--known \cite{Whittle-63} that $\Phi(z)$ can be expressed in factored form as 
\begin{equation}\label{eqn:mult_dec_Levinson}
\Phi (z) =  \left[L_n(z^{-1})\right]^{-1} \Lambda_n \left[L_n(z^{-1})\right]^{-\ast} 
\end{equation}
where $L_n(z^{-1})$ is the $n$--th Levinson--Whittle matrix polynomial associated with the block--Toeplitz matrix $\Tb_n$ 
\begin{equation}\label{eqn:Levinson_matrix_polynomial}
L_n(z^{-1}) = \sum_{k=0}^n A_n(k) z^{-k} 
\end{equation}
with the $A_n(k)$'s and $\Lambda_n=\Lambda_n^\top >0$ 
{being the} solutions of the Yule--Walker type equation  
\begin{equation}\label{eqn:Yule-Walker_eqn_for_A}
\bmat A_n(0) & A_n(1) & \dots & A_n(n)\emat 
\Tb_n^\top = 
\bmat \Lambda_n & 0 & \dots & 0\emat \,.
\end{equation}
Note that $\Phi(z)^{-1}=L_n(z^{-1})^* \Lambda_n^{-1} L_n(z)$ is a Laurent polynomial, that can be written as  
{\begin{align*}
\Phi(z)^{-1} 
&=M_0 + \left(M_1 z + M_2 z^2 + \dots + M_n z^n \right)+ \left(M_1 z + M_2 z^2 + \dots + M_n z^n \right)^*
\end{align*}}
Moreover, 
the $\Psi_\ell$'s 
{in \eqref{eqn:Psi_ell}} can be written as 
\footnote{{For $N$ even ${\rm e}^{{\rm j}\vartheta_\ell h}={\rm e}^{{\rm -j}\vartheta_\ell h} = -1$, 
so that $\left({\rm e}^{{\rm j}\vartheta_\ell}\right)^h \hat{\Sigma}_h^\top +\left({\rm e}^{{\rm -j}\vartheta_\ell}\right)^h \hat{\Sigma}_h= - \left(\hat{\Sigma}_h+\hat{\Sigma}_h^\top\right)$.} }
\begin{equation}\label{eqn:Psi_ell_simm}
\Psi_\ell= \hat{\Sigma}_0 + {\rm e}^{{\rm j}\vartheta_\ell} \hat{\Sigma}_1^\top + \dots + \left({\rm e}^{{\rm j}\vartheta_\ell}\right)^h  \hat{\Sigma}_h^\top + 
{\rm e}^{{\rm -j}\vartheta_\ell} \hat{\Sigma}_1 + \dots +   \left({\rm e}^{{\rm -j}\vartheta_\ell}\right)^h  \hat{\Sigma}_h 
\end{equation} 
where 
$$
h:=\left\{\begin{array}{ll} 
\frac{N-1}{2},& N {\rm\ odd}\\
N/2, & N {\rm\ even}
\end{array}\right.
$$
{
Now, comparing expression \eqref{eqn:Psi_ell_simm} with \eqref{eqn:spectral_density}, we can write}
\begin{equation}\label{feas-fppsi}
\Psi_\ell=\Phi\left({\rm e}^{{\rm j}\vartheta_\ell}\right)-\left[\Delta\Phi_N \left({\rm e}^{{\rm j}\vartheta_\ell}\right)+\Delta\Phi_N^\ast \left({\rm e}^{{\rm j}\vartheta_\ell}\right)\right]
\end{equation}
where 
$$
\Delta\Phi_N(z)  := \sum_{i=h+1}^\infty \hat{\Sigma}_iz^{-i}\,. 
$$
Since the causal part of $\Phi(z)$ is a rational function with poles inside the unit circle,  
$$\sup_{l=0,\, \dots,\, N-1}\big\|\Delta\Phi_N \left({\rm e}^{{\rm j}\vartheta_\ell}\right)+\Delta\Phi_N^\ast \left({\rm e}^{{\rm j}\vartheta_\ell}\right)\big\|\rightarrow 0$$ exponentially fast for $N\rightarrow \infty$. 
It follows that, for $N \rightarrow \infty$, $\Psib_\ell^{-1}$ tends to $\left(\Phi(e^{j\theta_\ell})\right)^{-1}$, which is given by 
\begin{align}\label{eqn:Psi_l_inv}
\left(\Phi(e^{j\theta_\ell})\right)^{-1}
& =M_0 + M_1 {\rm e}^{{\rm j}\vartheta_\ell} + \dots + M_n \left({\rm e}^{{\rm j}\vartheta_\ell}\right)^n \nonumber \\
& \; + M_1^\top \left({\rm e}^{{\rm j}\vartheta_\ell}\right)^{N-1}+ \dots + M_n^\top \left({\rm e}^{{\rm j}\vartheta_\ell}\right)^{N-n}  , 
\end{align}
{for every $ \ell=0, \, 1,\, \dots, \, N-1 $. 
In other words, $\Psib_\ell^{-1}$ tends to the finite Fourier transform of a sequence of the form 
$$
M_0, M_1^\top, M_2^\top, \dots, M_n^\top, 0 , \dots, 0 , M_n, \dots, M_1, 	
$$
i.e. $\left(\Sigmab_N^{(c)}\right)^{-1}$ tends to be banded block--circulant, as claimed. } 
\end{proof}

This result is very much in the spirit of the findings by Levy \cite{Levy-92}, which establish  a relation between stationary Markov and reciprocal processes on an infinite interval 
{and will be used in Section \ref{subsec:initialization} to provide an efficient inizialization for the proposed algorithm. }

{Feasibility of the CME problem has been addressed in \cite{CFPP-2011}, where a sufficient condition on the data for a positive definite block-circulant completion to exist has been derived. } 
There is a simple, yet, to the best of our knowledge, still unnoticed,  
necessary and sufficient condition for the existence of a positive definite circulant completion  
 for scalar (blocks of size $1 \times 1$) entries and bandwidth $n=1$ which
can be derived 
by combining the results in \cite{Barrett1993}, \cite{Dempster-72}, \cite{CG-2011}. It reads as follows.  
\begin{prop}\label{thm:feas_bs1_bw1} 
{Let $N \geq 4$. 
The partially specified circulant matrix 
\begin{equation}\label{eqn:part_spec_circ_b1_b1}
\bmat
\sigma_0 & \sigma_1 & ? & \dots & \dots & ? & \sigma_1 \\
\sigma_1 & \sigma_0 & \sigma_1 & ? & \dots & \dots & ? \\
? & \sigma_1 & \sigma_0 & \sigma_1 & ? &\dots & ? \\
\vdots & & & \ddots & & & \\
\vdots &  &  & & \ddots & &  ?\\
? & \dots & \dots & ? & \sigma_1 & \sigma_0 & \sigma_1 \\
\sigma_1 & ? & \dots & \dots & ? & \sigma_1 & \sigma_0
\emat
\end{equation}
admits a positive definite \emph{circulant} completion if and only if  $|\sigma_1| < \sigma_0$, for $N$ even, and 
$\cos\left(\frac{N-1}{N} \pi\right)\sigma_0  < \sigma_1 < \sigma_0$, for $N$ odd. }
\end{prop}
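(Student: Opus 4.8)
The plan is to reduce the question to an eigenvalue positivity condition using the diagonalization of circulant matrices, which is available here since any positive definite completion that is itself circulant is diagonalized by the Fourier matrix. The key structural fact (combining the Dempster property for the circulant case with the results of \cite{CG-2011}) is that \emph{if a positive definite circulant completion exists at all, then the maximum entropy circulant completion exists and is itself positive definite circulant}; hence feasibility is equivalent to positive definiteness of the specific circulant matrix whose inverse is banded circulant with the prescribed bandwidth. So the first step is: parametrize candidate completions by a single free parameter. For scalar blocks with bandwidth $n=1$, the inverse is a symmetric banded circulant $C$ with diagonal entries all equal to some $a$ and first off-diagonal (and corner) entries all equal to some $b$; that is, $C = a I + b(\Ub_N + \Ub_N^\top)$ in the scalar circulant algebra. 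Its eigenvalues are $a + 2b\cos(2\pi\ell/N)$, $\ell=0,\dots,N-1$, and one needs $C\succ 0$, i.e. $a + 2b\cos(2\pi\ell/N) > 0$ for all $\ell$, together with the two moment constraints $(C^{-1})_{00}=\sigma_0$ and $(C^{-1})_{01}=\sigma_1$.

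The second step is to make the constraint explicit. Since $C^{-1}$ is circulant with eigenvalues $\big(a+2b\cos(2\pi\ell/N)\big)^{-1}$, the required moments are
\begin{equation}\label{eqn:moment-fourier}
\sigma_0 = \frac1N\sum_{\ell=0}^{N-1}\frac{1}{a+2b\cos(2\pi\ell/N)}, \qquad
\sigma_1 = \frac1N\sum_{\ell=0}^{N-1}\frac{\cos(2\pi\ell/N)}{a+2b\cos(2\pi\ell/N)}.
\end{equation}
Only the ratio $r := \sigma_1/\sigma_0$ and the sign/scale really matter: rescaling $(a,b)\mapsto(ta,tb)$ rescales both $\sigma_0,\sigma_1$ by $1/t$, so without loss of generality we may fix $b$ (say $b=\pm1$ or normalize $a$) and study the one-parameter family. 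Thus feasibility becomes: \emph{does there exist $(a,b)$ with $a+2b\cos(2\pi\ell/N)>0$ for all $\ell$ realizing the prescribed ratio $\sigma_1/\sigma_0$?} The admissible region in parameter space is an open convex cone (intersection of the half-planes $a+2b\cos(2\pi\ell/N)>0$), and the map $(a,b)\mapsto \sigma_1/\sigma_0$ is continuous on it, so its image is an interval; the endpoints of that interval are found by letting $(a,b)$ approach the boundary of the cone. I would compute the limiting ratio as $a/b \to$ the two extreme values $-2\cos(2\pi\ell/N)$: the largest is attained at $\cos = 1$ (i.e. $\ell=0$), giving $a/b\to -2$; the smallest is attained at the most negative cosine, which is $-1$ when $N$ is even (giving $a/b\to 2$) and $\cos((N-1)\pi/N)$ when $N$ is odd. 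As one slides $a/b$ through the admissible open interval, the ratio $\sigma_1/\sigma_0$ sweeps monotonically — this monotonicity, provable by differentiating \eqref{eqn:moment-fourier} or by a direct interlacing/Cauchy–Schwarz argument on the spectral measure — between the two boundary values.

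The third step is to evaluate those two boundary limits and read off the stated inequalities. As $a/b\to -2$ from inside the cone (with $b>0$), the $\ell=0$ term $1/(a+2b)$ blows up and dominates both sums, forcing $\sigma_1/\sigma_0 \to \cos(0)=1$; this yields the common upper bound $\sigma_1 < \sigma_0$. For the lower bound, when $N$ is even the most negative eigenvalue direction is $\cos=-1$, the dominating term pushes $\sigma_1/\sigma_0\to \cos(\pi) = -1$, i.e. one recovers $-\sigma_0 < \sigma_1$, which combined with symmetry (replacing $b$ by $-b$ reflects $\sigma_1\mapsto-\sigma_1$) gives exactly $|\sigma_1|<\sigma_0$. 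When $N$ is odd, the extreme cosine is $\cos((N-1)\pi/N)$, negative but strictly greater than $-1$, so the dominating term pushes $\sigma_1/\sigma_0 \to \cos((N-1)\pi/N)$, giving the asymmetric bound $\cos\!\big(\tfrac{N-1}{N}\pi\big)\sigma_0 < \sigma_1 < \sigma_0$. The hypothesis $N\geq 4$ enters to guarantee there are genuinely ``unspecified'' entries (so the problem is non-degenerate) and that the relevant cosines are distinct enough for the limiting arguments; I would check the small cases by hand.

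The main obstacle I anticipate is rigorously establishing the \emph{monotonicity} of $\sigma_1/\sigma_0$ as a function of the single parameter $a/b$ along the admissible cone, and controlling the boundary limits: one must show the ratio is continuous up to the boundary and that the extreme eigenvalue term genuinely dominates (rather than several terms blowing up simultaneously, which can happen only if two of the $\cos(2\pi\ell/N)$ coincide — harmless since they contribute with the same sign). A clean way around an explicit calculus argument is to use the spectral-measure viewpoint: $\sigma_0$ and $\sigma_1$ are the $0$th and $1$st moments of the discrete measure $\mu$ on $\{\cos(2\pi\ell/N)\}$ with weights proportional to $1/(a+2b\cos(2\pi\ell/N))$, and as $a/b$ moves toward a boundary value $-2\cos(2\pi\ell_0/N)$ the normalized measure $\mu/\mu(\text{total})$ converges weakly to the point mass at $\cos(2\pi\ell_0/N)$, so $\sigma_1/\sigma_0\to\cos(2\pi\ell_0/N)$; combined with connectedness of the admissible cone this immediately gives that the image is the open interval between the extreme cosines, which is precisely the claim. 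Alternatively, and perhaps most cleanly, one can bypass the circulant computation entirely and invoke the cited chordality/cycle results of \cite{Barrett1993}: the specified-entries graph here is the $N$-cycle $C_N$, and \cite{Barrett1993} characterizes which partial matrices on a cycle admit a positive definite completion — for the constant-weight cycle this characterization reduces directly to the stated cosine conditions, with the circulant-extension equivalence \cite{CG-2011} ensuring the completion can moreover be taken circulant. I would present the Fourier/eigenvalue argument as the main line and remark on the graph-theoretic route.
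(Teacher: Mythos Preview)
Your proposal is correct, and in fact the alternative route you sketch at the very end \emph{is} the paper's proof: the paper simply invokes \cite[Corollary~5]{Barrett1993} for the characterization of positive definite (not necessarily circulant) completability of the $N$-cycle pattern, then uses Dempster's result together with \cite{CG-2011} to conclude that whenever any positive definite completion exists, the maximum-entropy one does too and is automatically circulant. That is the entire argument in the paper---three citations and no computation.

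Your main line, by contrast, replaces the appeal to \cite{Barrett1993} with a direct spectral computation: parametrize the banded circulant inverse by $(a,b)$, express $\sigma_0,\sigma_1$ as discrete Fourier moments, and show that the achievable range of $\sigma_1/\sigma_0$ is exactly the open interval between the extreme values of $\cos(2\pi\ell/N)$. This works, and your weak-convergence/connectedness argument for identifying the image interval is sound (monotonicity is not actually needed once you observe that the map is degree-zero homogeneous, hence a continuous function on a one-parameter family of rays, with boundary limits equal to the two extreme cosines and strict inequalities excluding the endpoints). The trade-off is clear: the paper's proof is shorter and leverages an existing cycle-completion result, while your main argument is more self-contained and makes the role of the discrete spectrum of the circulant completely explicit---which fits naturally with the Fourier-diagonalization machinery used elsewhere in the paper. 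Both routes rest on the same structural input from \cite{CG-2011} (circulant data $\Rightarrow$ circulant maximum-entropy completion), so neither is more elementary in that respect.
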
 
\begin{proof}
In \cite[Corollary 5]{Barrett1993} it is shown that the partially specified circulant matrix \eqref{eqn:part_spec_circ_b1_b1} admits a positive definite (but not necessarily circulant) 
completion if and only if  $|\sigma_1| < \sigma_0$, for $N$ even, and 
$\cos\left(\frac{N-1}{N} \pi\right)\sigma_0  < \sigma_1 < \sigma_0$, for $N$ odd.
On the other hand, Dempster \cite{Dempster-72} shows that if there is any positive definite symmetric matrix
which agrees with the partially specified one in the given positions, then there exists exactly one such a 
matrix with the additional property that its inverse has zeros in the complementary positions of those specified 
and this same matrix is the one which maximizes the entropy functional among all the normal models whose covariance matrix agrees with the given data. But, accordingly to the findings in \cite{CG-2011}, 
if the given data are consistent with a circulant structure, the maximum entropy completion is necessarily circulant, 
which concludes the proof.   
\end{proof}

Proposition \ref{thm:feas_bs1_bw1} provides an explicit condition on the off--diagonal entries for the CME to be feasible. Moreover, for given $\sigma_0$ and $\sigma_1$, it states that feasibility depends on the size $N$ of the asked completion.  This confirms, by a completely independent argument, the findings in \cite[Theorem 5.1]{CFPP-2011}, where the dependency of feasibility on the completion size $N$ has been first noticed (and proved for a generic block--size and bandwidth). 
A clarifying example, which also makes use of the characterization of the set of the positive definite completions in \cite{CG-2011}, is presented in Appendix \ref{app:example}.

\section{A new algorithm for the solution of the CME problem}\label{sec:MGD}
 
In this Section we shall derive our new algorithm to solve the CME problem. 
{The derivation rests upon duality theory for the CME problem developed in \cite[Section VI]{CFPP-2011} and profits by 
the structure of our CME 
along with the properties of block--circulant matrices to devise a computationally advantageous procedure for the computation of its solution.  } 

Consider the CME as defined in \eqref{MaxEntProbl} and define the linear map 
\begin{equation}
\begin{array}{lccl}
A \;: \; & \Symmetric_{n+1}\times\Symmetric_N  & \rightarrow & \Symmetric_N\\
         & (\Lambda,\Theta)                    & \mapsto     & E_n\Lambda E_n\tp +{\Ub}_N \Theta {\Ub}_N \tp -\Theta
\end{array}
\end{equation}
and 
the set 
\begin{align}\label{eqn:L_+}
{\cal L}_+ :=&\{(\Lambda,\Theta)\in (\Symmetric_{n+1}\times\Symmetric_N)\mid (\Lambda,\Theta)\in(\ker(A))^\perp, \nonumber \\
           &  \left(E_{n}\Lambda E_n^{\top} +{\Ub}_N \Theta {\Ub}_N \tp -\Theta\right) > 0\}.  
\end{align}
${\cal L}_+$ is an open, convex subset of $(\ker(A))^\perp$. 
Letting $\langle A,\,B\rangle := \Tr AB\tp$, the {\em Lagrangian function} results 
\begin{align*}
L(\Sigmab_N,\Lambda,\Theta)&:=- \, \Tr\log\Sigmab_N+ \left\langle \Lambda, \left(E_n^{\top}\Sigmab_N E_n-\Tb_n\right)\right\rangle, + \left\langle \Theta, \left({\Ub}_N \tp \Sigmab_N {\Ub}_N -\Sigmab_N\right) \right\rangle \\
&\,=-\, \Tr\log\Sigmab_N+\Tr\left(E_n\Lambda E_n\tp \Sigmab_N\right) \, -\Tr\left(\Lambda\Tb_n\right)+\Tr\left({\Ub}_N \Theta {\Ub}_N \tp \Sigmab_N \right) \, -\Tr\left(\Theta \Sigmab_N \right)
\end{align*}
and its first variation (at $\Sigmab_N$ in direction $\delta\Sigmab_N \in \Symmetric_N$) is 
\begin{align*}
\delta L(\Sigmab_N,\Lambda,\Theta;\delta\Sigmab_N)=& -\Tr\left(\Sigmab_N^{-1}\delta\Sigmab_N\right)+\Tr\left(E_n\Lambda E_n\tp \delta \Sigmab_N\right)  + \Tr\left(\left({\Ub}_N \Theta {\Ub}_N \tp -\Theta\right)\delta\Sigmab_N\right).
\end{align*}
Thus $\delta L(\Sigmab_N,\Lambda,\Theta;\delta\Sigmab_N)=0,\; \forall \delta\Sigmab_N\in\Symmetric_N$ if and only if
$$
\Sigmab_N^{-1}=E_n\Lambda E_n\tp +{\Ub}_N \Theta {\Ub}_N \tp -\Theta.
$$
It follows that, for each fixed pair $(\Lambda,\Theta)\in {\cal L}_+$, the unique $\Sigmab_N^o$ minimizing the Lagrangian over ${\mathfrak S}_{N,+}:=\{\Sigmab_N \in\Symmetric_N,\;\Sigmab_N>0\}$ is 
\begin{equation}
\Sigmab_N^o=\left(E_n\Lambda E_n\tp +{\Ub}_N \Theta {\Ub}_N \tp -\Theta\right)^{-1}.
\end{equation}
Moreover, computing the Lagrangian at $\Sigmab_N = \Sigmab_N^o$ results in 
\begin{align}
\nonumber L(\Sigmab_N^o,\Lambda,\Theta) & = -\Tr\log\left(\left(E_{n}\Lambda E_n^{\top} + {\Ub}_N \Theta {\Ub}_N \tp -\Theta\right)^{-1}\right) \nonumber \\
& \;\; \; \; + \Tr \Big[\left(E_{n}\Lambda E_n^{\top} +{\Ub}_N \Theta {\Ub}_N \tp -\Theta\right) \nonumber \\ 
& \;\;\; \; \left(E_{n}\Lambda E_n^{\top} +{\Ub}_N \Theta {\Ub}_N \tp -\Theta\right)^{-1}\Big] -\Tr(\Lambda\Tb_n)\nonumber \\
& =\Tr\log\left(E_{n}\Lambda E_n^{\top} +{\Ub}_N \Theta {\Ub}_N \tp -\Theta\right) \nonumber \\ 
& \;\;\;\; + \Tr I_{mN}-\Tr\left(\Lambda\Tb_n\right).\nonumber
\end{align}
This is a strictly concave function on ${\cal L}_+$ whose maximization is the {\em dual problem} of (CME).
We can equivalently consider the convex problem
\begin{equation}\label{eqn:dual}
\min\left\{J(\Lambda,\Theta),(\Lambda,\Theta)\in{\cal L}_+\right\},
\end{equation}
where $J$ is given by
\begin{equation}\label{eqn:dual_function}
J(\Lambda,\Theta) = \Tr\left(\Lambda\Tb_n\right)-\Tr\log\left(E_n\Lambda E_n\tp +{\Ub}_N \Theta {\Ub}_N \tp -\Theta\right).
\end{equation}
It can be shown (\cite[Theorem 6.1]{CFPP-2011}) that the function $J$ admits a unique minimum point  $\left(\bar{\Lambda},\bar{\Theta}\right)$ in $\cal{L}_+$. 
The gradient of $J$ is 
\begin{subequations}
\begin{align}
\nabla_\Lambda J(\Lambda, \Theta) &= -E_n^\top \left(E_n \Lambda E_n^\top + {\Ub}_N \Theta {\Ub}_N^\top - \Theta \right)^{-1} E_n + \Tb_n  \label{eqn:grad_Lambda}\\
\nabla_\Theta J(\Lambda, \Theta) &= -{\Ub}_N^\top \left(E_n \Lambda E_n^\top + {\Ub}_N \Theta {\Ub}_N^\top - \Theta \right)^{-1} {\Ub}_N + \left(E_n \Lambda E_n^\top + {\Ub}_N \Theta {\Ub}_N^\top - \Theta \right)^{-1}  \label{eqn:grad_Theta}
\end{align}
\end{subequations}
Thus the application of whatever first--order iterative method for the minimization of $J$ 
would involve repeated inversions of the $mN\times mN$ block matrix 
$(E_n \Lambda E_n^\top +$ ${\Ub}_N \Theta {\Ub}_N^\top -\Theta)$, which could be a prohibitive task for $N$ large.  
Neverthless, by exploting our knowlwdge of the problem, we can devise 
the following alternative. 
Let $(\bar{\Lambda},\bar{\Theta})$ be the unique minimum point of the functional $J$ on ${\cal L}_+$. 
We know that $(\bar{\Lambda},\bar{\Theta})$ are such that $\Sigma^o = E_n\bar{\Lambda} E_n^\top +U_N \bar{\Theta} U_N ^\top -\bar{\Theta}$ is circulant. 
Thus, one can think of restricting the search for the solution of the optimization problem to the set  
\begin{equation}\label{eqn:set_circ_1}
\left\{ \left(\Lambda,\Theta\right) \, \mid \, 
\left(E_n \Lambda E_n^\top +\Ub_N \Theta \Ub_N ^\top -\Theta \right) \; \text{ is circulant}  \right\}\,.
\end{equation}
If we denote by $\mathfrak{C}_{N}$  the linear subspace of symmetric, block--circulant matrices and  by $\Pi_{\mathfrak{C}_{N}}$ the orthogonal projection on $\mathfrak{C}_{N}$, the set \eqref{eqn:set_circ_1} can be written as 
\begin{equation}\label{eqn:set_circ_2}
\left\{\left(\Lambda,\Theta\right) \, \mid \, 
\Pi_{\mathfrak{C}_{N}^\perp}\left(E_n \Lambda E_n^\top +\Ub_N \Theta \Ub_N ^\top -\Theta \right)=0  \right\}\,.
\end{equation}
We can now exploit the characterization of the matrices belonging to the orthogonal complement of  $\mathfrak{C}_{N}$ in \cite[Lemma 6.1]{CFPP-2011}, which states that a symmetric matrix $M$ belongs to the orthogonal complement of  $\mathfrak{C}_{N}$, say ${\mathfrak{C}_{N}}^\perp$, if and only if, for some $N\in\Symmetric_N$, it can be expressed as
$M={\Ub}_N N{\Ub}_N \tp -N$. 
Thus $(\Ub_N \Theta \Ub_N ^\top -\Theta) \in {\mathfrak{C}_{N}}^\perp$ and set \eqref{eqn:set_circ_2} can be written as
\begin{equation} \label{eqn:set_Lambda_Theta_st_map_is_circ}
\left\{\left(\Lambda,\Theta\right) \, \mid \, 
\Pi_{\mathfrak{C}_{N}^\perp}\left(E_n \Lambda E_n^\top\right) = - \left(\Ub_N \Theta \Ub_N ^\top -\Theta \right)  \right\}\,.
\end{equation}
If we compute the dual function $J$ on the set \eqref{eqn:set_Lambda_Theta_st_map_is_circ}, we obtain
\begin{align}
&J(\Lambda,\Theta)\mid_{\left\{\left(\Lambda,\Theta\right) \, \mid \, 
\Pi_{\mathfrak{C}_{N}^\perp}\left(E_n \Lambda E_n^\top\right) = - \left(U_N \Theta U_N ^\top -\Theta \right)\right\}}  \nonumber\\
&= \Tr\left(\Lambda \Tb_n\right)-\Tr \log \left(E_n \Lambda E_n^\top + U_N \Theta U_N ^\top -\Theta^\top \right) \nonumber \\
&= \Tr\left(\Lambda \Tb_n\right)-\Tr \log \left(E_n \Lambda E_n^\top - \Pi_{\mathfrak{C}_{N}^\perp} \left(E_n \Lambda E_n^\top \right) \right) \nonumber \\ 
&= \Tr\left(\Lambda \Tb_n\right)-\Tr \log \left(\Pi_{\mathfrak{C}_{N}} \left(E_n \Lambda E_n^\top  \right) \right)
\end{align}
where an explicit formula for the orthogonal projection of $E_n \Lambda E_n^\top$ on the subspace of symmetric, block--circulant matrices is given by Theorem 7.1 in \cite{CFPP-2011}. 
In fact, if we denote with 
$$
{\Lambda}=\bmat 
{\Lambda}_{00}     & {\Lambda}_{01}   & \ldots  & {\Lambda}_{0n}\\
{\Lambda}_{01}\tp  &{\Lambda}_{11}    & \ldots  & {\Lambda}_{1n}\\
           \vdots  &                  & \ddots  & \vdots\\
{\Lambda}_{0n}\tp  &{\Lambda}_{1n}\tp & \ldots  & {\Lambda}_{nn} 
\emat \,,
$$ 
it can be shown that 
the orthogonal projection of $E_n \Lambda E_n^\top$ onto $\mathfrak{C}_{N}$, say $\Pi_{{\Lambda}}$, is 
the banded block--circulant matrix given by 
$$
\Pi_{{\Lambda}}:=\Pi_{\mathfrak{C}_{N}}\left(E_n {\Lambda} E_n\tp \right)= 
\bmat
\Pi_0 & \Pi_1^\top & \dots & \Pi_n^\top & 0 & \dots & 0 & \Pi_n & \dots & \Pi_1 \\
\Pi_1 & \Pi_0 & \Pi_1^\top & \dots & \Pi_n^\top & 0& \dots & 0 & \ddots & \vdots \\
\vdots & \ddots & \ddots & \ddots & & \ddots & \ddots & & \ddots& \Pi_n\\
 \Pi_n& & \ddots & \ddots & \ddots & &  \ddots& \ddots & & 0\\
  0& \ddots & & \ddots & \ddots & \ddots & &  \ddots& \ddots & \vdots \\
  \vdots&  \ddots& \ddots & &  \ddots & \ddots & \ddots & & \ddots & 0 \\
  0& & \ddots & \ddots & & \ddots & \ddots & \ddots & & \Pi_n^\top \\
  \Pi_n^\top& \ddots& &  \ddots& \ddots & & \ddots & \ddots & \ddots & \vdots \\
  \dots & \Pi_n^\top & 0& \dots & 0 & \Pi_n & \dots & \Pi_1 & \Pi_0 & \Pi_1^\top \\
  \Pi_1^\top & \dots & \Pi_n^\top & 0 & \dots & 0 & \Pi_n & \dots & \Pi_1 &  \Pi_0 
\emat
$$
with 
\begin{subequations}\label{subeq:Pi_k}
\begin{eqnarray}
&&\Pi_{0}=\frac{1}{N}\,({\Lambda}_{00}+{\Lambda}_{11} +\ldots + {\Lambda}_{nn}), \\
&&\Pi_{1}=\frac{1}{N}({\Lambda}_{01} + {\Lambda}_{12} +\ldots + {\Lambda}_{n-1,n})\tp,\\
& & \vdots \nonumber\\
&&\Pi_{n}=\frac{1}{N} {\Lambda}_{0n}\tp \,,
\end{eqnarray}
\end{subequations}
and $\Pi_i=0$,  forall $i$ in the interval $\, n+1\leq i \leq N-n-1\,$.
Let us denote with $\bar{J}$ the restriction of $J$ on \eqref{eqn:set_Lambda_Theta_st_map_is_circ} 
\begin{equation} \label{eqn:pseudo_DualFunction}
\Jb (\Lambda) := \Tr\left(\Lambda \Tb_n\right) - \Tr \log \left\{\Pi_{\mathfrak{C}_{N}}\left(E_n \Lambda E_n^\top\right)\right\}  \,. 
\end{equation} 
The gradient of the modified functional $\Jb$ 
is 
\begin{equation}\label{eqn:grad_J_bar}
\nabla_\Lambda \Jb(\Lambda) = -E_n^\top \Pi_{{\Lambda}}^{-1} E_n + \Tb_n \,.
\end{equation}
Again, the computation of the gradient matrix involve the inversion of an $mN\times mN$ matrix, namely 
the projection on the subspace of symmetric block--circulant matrices of $E_n \Lambda E_n^\top$, $\Pi_{\Lambda}$. 
Neverthless, notice that this time the $mN\times mN$ matrix to be inverted is \emph{block--circulant}, which implies that its inverse can be efficiently computed by exploting the block--diagonalization 
\begin{equation}\label{eqn:blk_diag_circ_grad}
\Pi_{\Lambda}= \Vb \Omegab_N \Vb^\ast\,,
\end{equation}
where $\Vb$ is the block--Fourier matrix \eqref{eqn:Fourier_matrix} and $\Omegab_N$ is the block--diagonal matrix whose diagonal blocks 
are the coefficients of the finite Fourier transform of the first block row of $\Pi_{\Lambda}$.  
In fact, \eqref{eqn:blk_diag_circ_grad} yields
$$
\Pi_{\Lambda}^{-1} = \Vb \Omegab_N^{-1} \Vb^\ast\,,
$$
so that the cost of computing $\Pi_{\Lambda}^{-1}$ reduces to the cost of singularly inverting the $m \times m$ diagonal blocks of $\Omegab_N$ and indeed,  
by exploiting the Hermitian symmetry of the diagonal blocks of $\Omegab_N$, 
to the cost of inverting only the first $\left\lceil \frac{N+1}{2}\right\rceil$  $m \times m$ blocks of $\Omegab_N$.  
As a final improvement, notice that due to the final left and right multiplication by $E_n^\top$ and $E_n$, 
only the first $n+1$ blocks of $\Pi_{\Lambda}^{-1}$ 
are needed to compute the gradient. 

To recap, the proposed procedure reduces the computational cost of each iteration of a generic first--order descent method to $O(m^3)$ flops, in place of the $O(N^3)$ operations per iteration which would have been required by a straighforward application of duality theory.

In the following, we apply a gradient descent method to the optimization of the modified functional $\bar{J}$. 
The overall proposed procedure  is as follows. 

\begin{algorithm}                      
\caption{Matricial gradient descent algorithm}          
\label{alg:MGD}                           
\begin{algorithmic}                    
\STATE Given a starting point $\Lambda \, \in \, \text{dom} \, \Jb$, $\alpha \in (0,0.5)$, $\beta \in (0,1)$ 
\WHILE {$\left\|\nabla_\Lambda \Jb(\Lambda)\right\|_2 > \eta$} 
\STATE $\Delta \Lambda := - \nabla_\Lambda \Jb(\Lambda) $
\WHILE {$\Jb(\Lambda+t\Delta \Lambda) > \Jb(\Lambda) + \alpha t \,\Tr\left\{\nabla \Jb(\Lambda)^\top \Delta \Lambda\right\}$ }
\STATE $t:=\beta t$ 
\ENDWHILE
\STATE $\Lambda := \Lambda + t \Delta \Lambda$
\ENDWHILE
\end{algorithmic}
\end{algorithm}

In the next subsection we provide an efficient initialization for Algorithm \ref{alg:MGD}. 

A comparison of the proposed procedure with  state of the art algorithms for DME from the literature will be presented in Section \ref{sec:num_exp}.

\subsection{Algorithm initialization}\label{subsec:initialization}

{In this Section we exploit the asymptotic result in Theorem \ref{thm:link_TMEExt_CMEExt} 
to provide a good  starting point for the iterative procedure of Algorithm \ref{alg:MGD}. }
To this aim,  
recall that the  maximum entropy completion of a partially specified block--\emph{Toeplitz} matrix can be computed via the formula   
\begin{equation}\label{eqn:ME_spectrum_Tryphon}
\Phi (z)= \left(G^\ast(z) \Tb_n^{-1} \tilde{B} \left(\tilde{B}^\ast \Tb_n^{-1} \tilde{B}\right)^{-1} \tilde{B}^\ast \Tb_n^{-1} G (z)\right)^{-1} 
\end{equation}
(see \cite{Georgiou-06} for details), where
\begin{equation}\label{eqn:tf_func_G}
G(z)= \left(zI-\tilde{A}\right)^{-1} \tilde{B}
\end{equation}
with 
\begin{equation}\label{eqn:BandA_tilde}
\tilde{B} = \bmat 0 \\0 \\ \vdots \\0 \\ I\emat\,, \qquad \tilde{A}= \bmat 
0 & I & 0 & \dots & 0 \\
0 & 0 & I & \dots & 0 \\
\vdots &  &  & \ddots &   \\
0 &  &  &  & I \\
0 & \dots & \dots & \dots & 0
\emat\,.
\end{equation}
\begin{figure}[t!]
\centering
\includegraphics[width=0.6\textwidth]{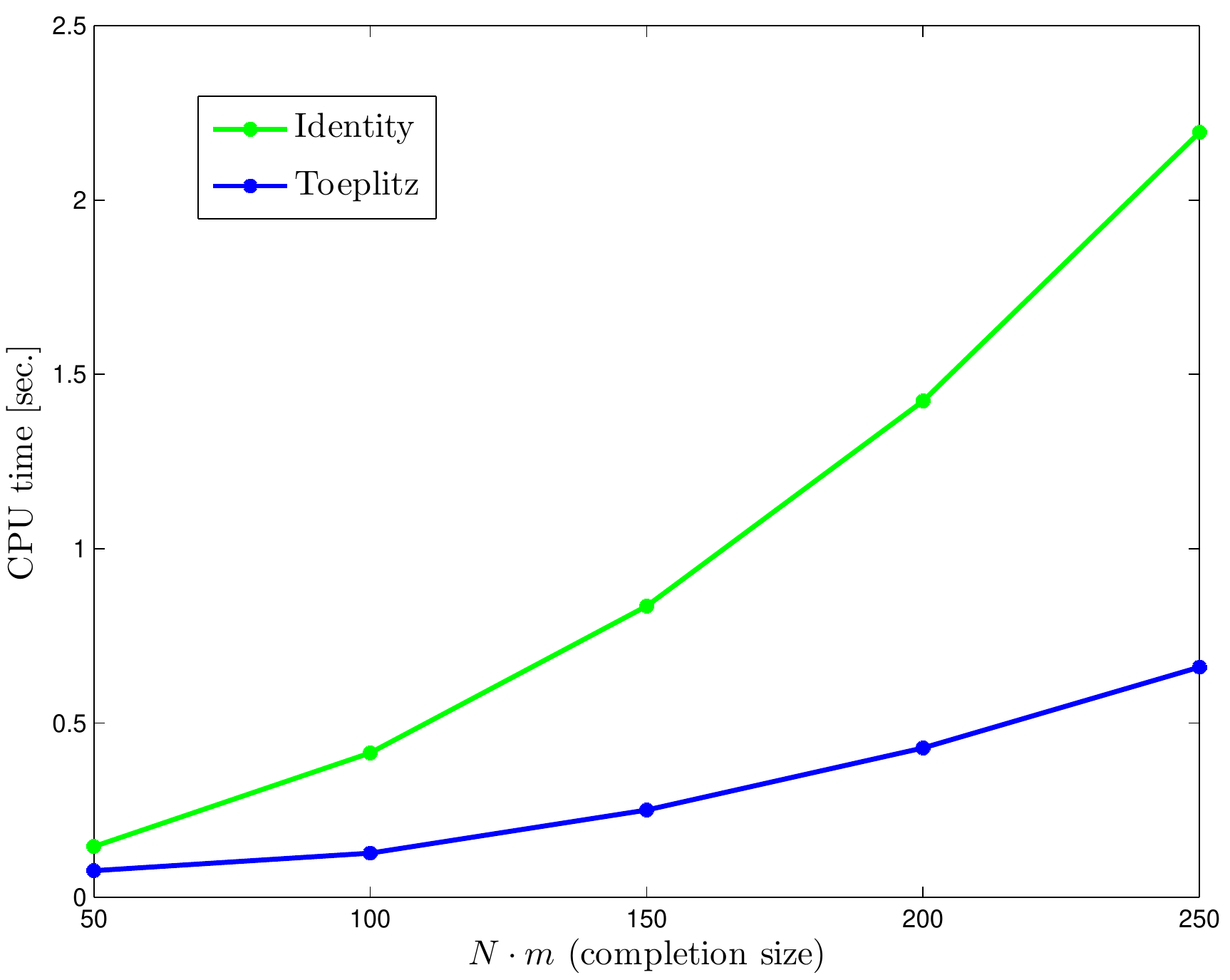}
\caption{CPU time [in sec.] for the matricial gradient descent algorithm with different initializations 
(identity in green and as in Section \ref{subsec:initialization} in blue). 
The reported times have been computed for $N=[10,20,30,40,50]$, $m=5$ and bandwidth $n = 3$.   \label{fig:Cfr_GDIvsGDT_m5n3}}
\end{figure}

\begin{table}[h!]
\begin{center}
\begin{tabular}{cc|cc|cc}
  & & \multicolumn{2}{c}{Identity}  & \multicolumn{2}{c}{Toeplitz}  \\ 
     $N$ & $m$ &     \# of itz.  & CPU time & \# of itz. & CPU time \\
 \hline         
10 & 5 & 99   & 0.1455 &  61  &  0.0767\\
20 & 5 & 212  &  0.4143 &  65 &   0.1270\\
30 & 5 & 322  &  0.8355 &  97 &   0.2504\\
40 & 5 & 432  &  1.4233 & 130 &   0.4285\\
50 & 5 & 541  &  2.1937 & 163 &   0.6603
\end{tabular}
\end{center}
\caption[]{CPU time [in sec.] for the matricial gradient descent algorithm with different initializations 
(identity on the left and as in Section \ref{subsec:initialization} on the right). 
The reported times have been computed for $N=[10,20,30,40,50]$, $m=5$ and bandwidth $n = 3$.   }
\label{tab:Cfr_GDIvsGDT_m5n3}
\end{table}
It follows that the spectral factor $W(z):=\left[L_n(z^{-1})\right]^{-1} \Lambda_n^{\frac12}$ has a realization
$$
W(z)= C(zI- A)^{-1} B + D
$$
with $D = \Lambda_n^{\frac12}$, $C= -\bmat A_n(n) & A_n(n-1) & \dots & \dots & A_n(1) \emat$ and
$$
A = \bmat 0 & I_m & 0 & 0 & \dots & 0 \\
0 & 0 & I_m & 0 & \dots & 0 \\
\vdots & & & \ddots &  & \vdots \\
\vdots & & & & \ddots & 0 \\
0 & \dots & \dots & \dots & 0 & I_m \\
-A_n(n) & -A_n(n-1) & \dots & \dots & \dots & -A_n(1) 
\emat \,, \qquad 
B = \bmat 0 \\ 0 \\ \vdots \\ 0 \\ \Lambda_n^{\frac12}\emat \,,
$$
The positive real part of the maximum entropy spectrum is given by 
\begin{equation}\label{eqn:Phi_+}
\Phi_{+}(z) = C(zI-A)^{-1} \bar{C}^\top + \frac12 \, \Sigma_0
\end{equation}
where $\bar{C} ^ \top = APC^\top + BD^\top$, 
with $P = APA^\top + BB^\top$
and the maximum entropy covariance extension results
$$
\hat{\Sigma}_k = C A^{k-1} \bar{C}^\top \,, \qquad k>n .
$$
With this extension at hand, we can compute an approximation for the maximum entropy block--circulant extension as suggested by 
Theorem \ref{thm:link_TMEExt_CMEExt}. 
A good starting point for our 
algorithm can then be obtained from \eqref{subeq:Pi_k} assuming for $\Lambda$ a Toeplitz structure.  

{As an example, we have compared the execution time of the proposed 
algorithm initialized with the identity matrix and initialized 
with the solution of the associated matrix extension problem for Toeplitz matrices as described above
for 
blocks of size $m=5$, bandwidth $n=3$ and $N$ varying between $10$ and $50$. 
The computational times 
are reported in Figure \ref{fig:Cfr_GDIvsGDT_m5n3} along with Table \ref{tab:Cfr_GDIvsGDT_m5n3}. 
The simulation results confirm that} the proposed initialization acts effectively to reduce the number of iterations (and thus the computational time) required to reach the minimum. 

\section{Algorithms for the unstructured covariance selection problem}\label{sec:alg_cov_sel}
{In this Section we introduce and discuss some of the main algorithms in the literature for the 
positive definite matrix completion problem with the aim of comparison with our newly proposed algorithm.  }

In the literature concerning matrix completion problems,     
it is   common practice  
to describe the pattern of the specified entries of an $mN\times mN$ partial symmetric matrix $M=(m_{ij})$ by an undirected graph 
of $mN$ vertices which has an edge joining vertex $i$ and vertex $j$ if and only if the entry $m_{ij}$ is specified. 
Since the diagonal entries are all assumed to be specified, we ignore loops at the vertices.
\newsavebox{\tempbox}
\begin{figure}[htbp]%
\centering 
\sbox{\tempbox}{\includegraphics[width=0.39\textwidth]{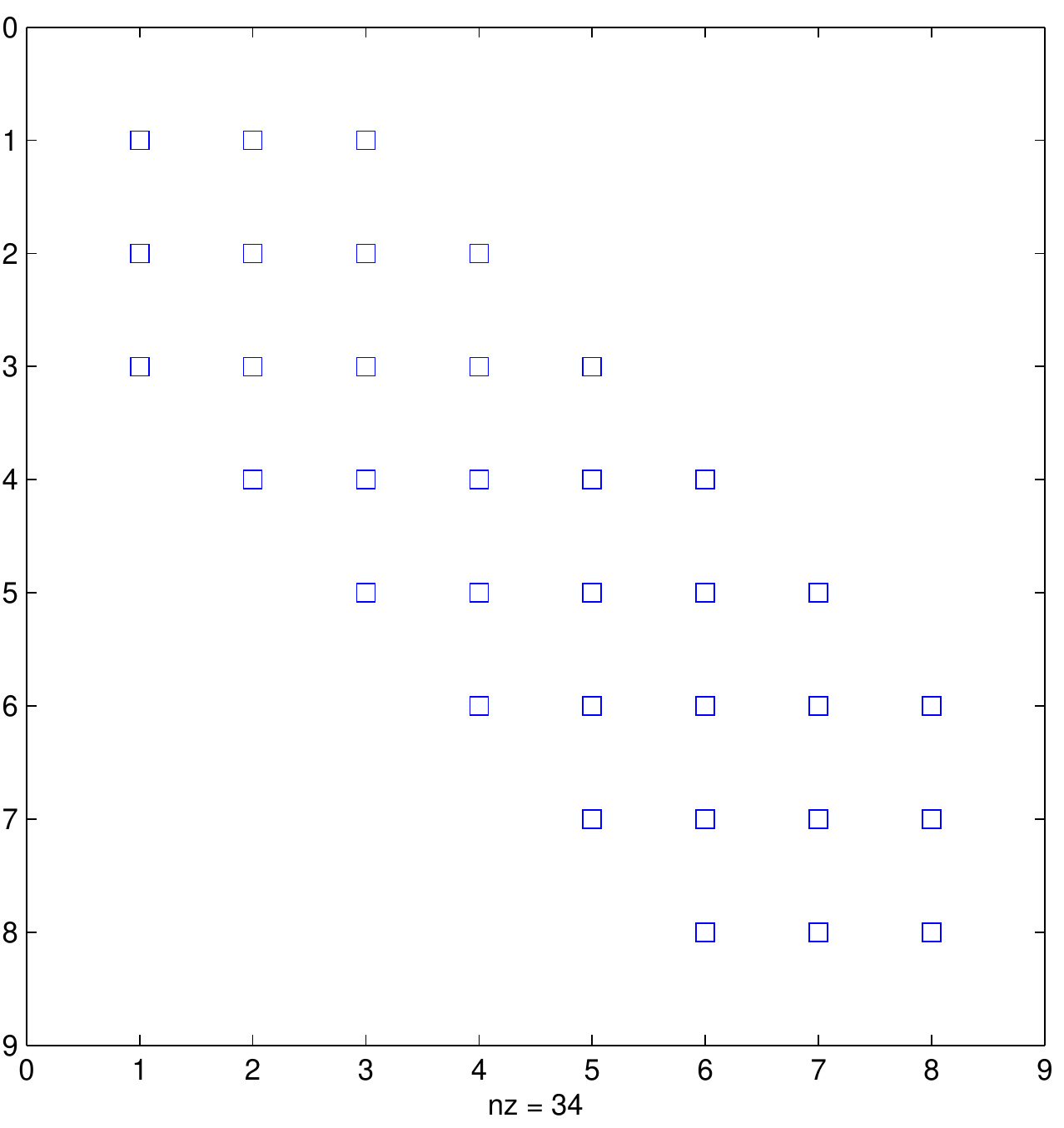}}
\subfloat[][]{\usebox{\tempbox}\label{fig:ToeplBandedSparsityPattern_N8m2}}%
\qquad
\subfloat[][]{\vbox to \ht\tempbox{%
  \vfil
  \hbox{\includegraphics[width=0.39\textwidth]{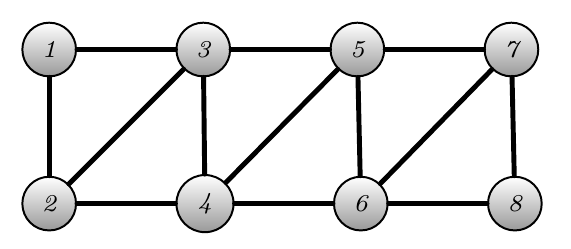}\label{fig:TSP_N8_mod}}
  \vfil}}%
  \caption{Banded pattern of the given entries for the TME problem 
with $N=8$, $n=2$, $m=1$ (on the left) and associated graph (on the right).\label{fig:es_grafo_N10_n8}}
\end{figure}
\begin{figure}[htbp]
\centering 
\subfloat[][] {\includegraphics[width=0.39\textwidth]{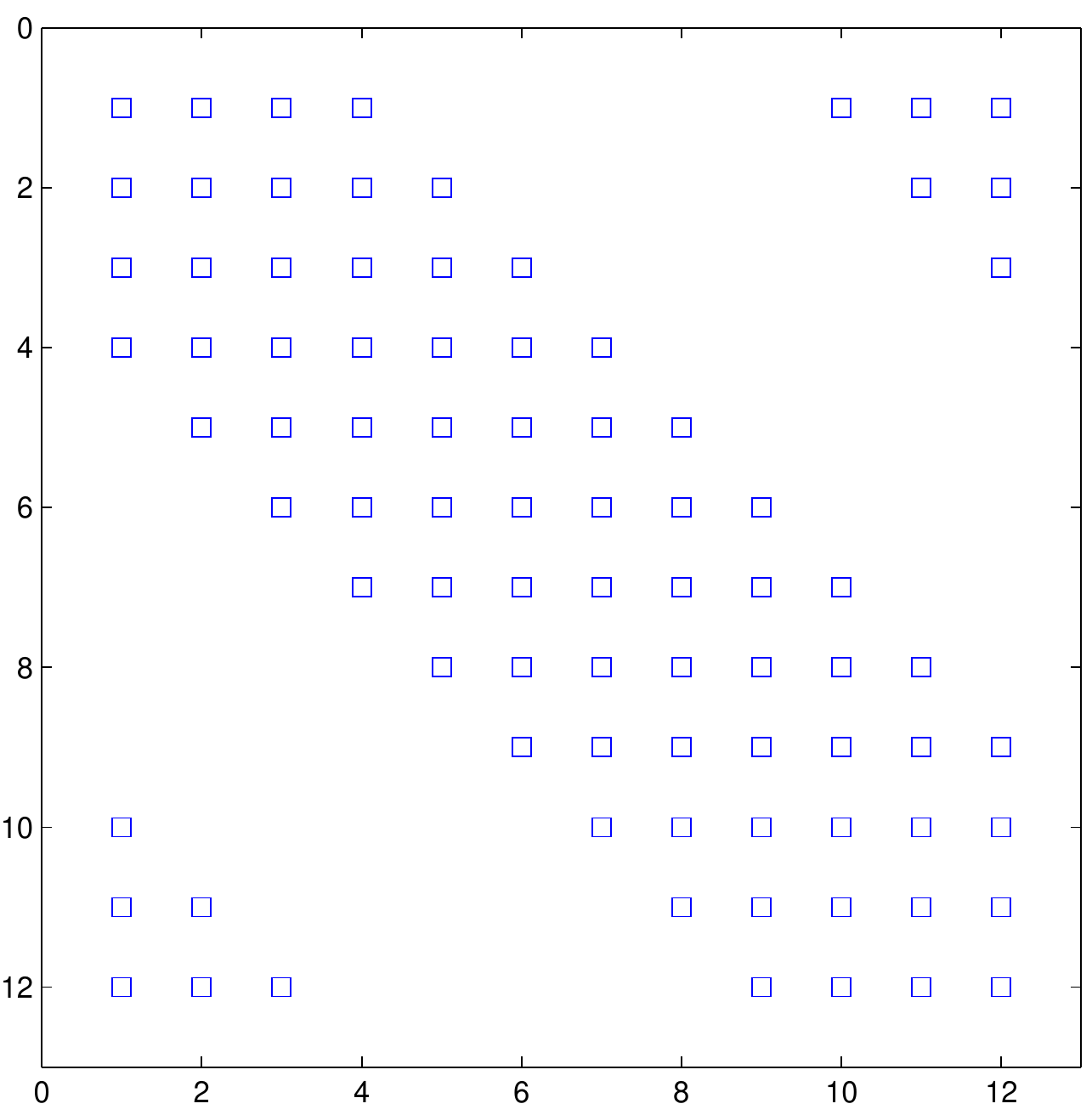}\label{fig:SparsityPatternN12n3}}\hspace{1.5cm}
\subfloat[][] {\includegraphics[width=0.39\textwidth]{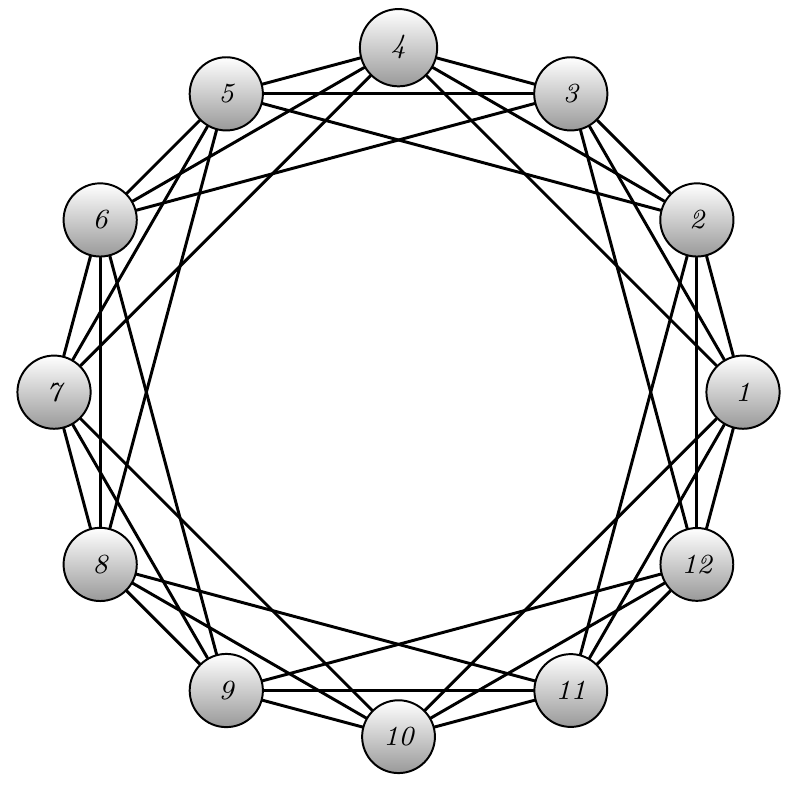}\label{fig:grafo_N12_n3}}
\caption{Banded pattern of 
the given entries for the CME 
with $N=12$, $n=3$, $m=1$ (on the left) and associated graph (on the right). 
The graph is \emph{not} chordal since, for example, the cycle $\left\{1,4,7,10\right\}$ does not have a chord. \label{fig:es_grafo_N12_n3}\vspace{-5mm}}
\end{figure}

As anticipated in the Introduction, if the graph of the specified entries is \emph{chordal} (i.e., a graph in which every cycle of length greater than three has an edge connecting nonconsecutive nodes, see e.g.\cite{Golumbic-80}), 
the maximum determinant matrix completion problem admits a closed form solution in terms of the principal minors of the sample covariance matrix 
(see \cite{BJL-89}, \cite{FKMN-00}, \cite{NFFKM-03}).  
{An example of chordal sparsity pattern along with the associated graph is shown in Figure \ref{fig:es_grafo_N10_n8}.} 
However, the graph associated with a banded circulant sparsity patterns is not chordal, as it is apparent from 
the example of Figure \ref{fig:grafo_N12_n3}.
Therefore we have to resort to \emph{iterative algorithms}. 
For the applications we have in mind, we are dealing with vector--valued processes possibly defined on a quite large interval.  
A straightforward application of standard optimization algorithms is too expensive for problems of such a size, 
and 
anumber of specialized algorithms have been proposed in the graphical models literature (\cite{Dempster-72,SK-86,Wermuth-S-77,Kullback-68}).  
In his early work (\cite{Dempster-72}), Dempster himself proposed two iterative algorithms which however are very demanding from a computational point of view. 
Two popular methods are those proposed by T. P. Speed and H. T. Kiiveri in \cite{SK-86}, that we now briefly discuss. 

\paragraph{Speed and Kiiveri's algorithms}  
We will denote an undirected graph by $\Gc=(V, E)$, where $V$ is the vertex set and $E$ the edge set which consists of 
unordered pairs of distinct vertices. 
In any undirected graph we say that $2$ vertices $u$, $v \in V$ are \emph{adjacent} if $(u,v)\in E$. 
For any vertex set $S \subseteq V$, consider the edge set $E(S)\subseteq E$ given by 
$$
E(S):=\left\{(u,v) \in E \mid u,v \in S \right\}
$$
The graph $\Gc(S)=\left(S, E(S)\right)$ is called \emph{subgraph of $\Gc$ induced by $S$}. 
An induced subgraph $\Gc(S)$ is \emph{complete} if the vertices in $S$ are pairwise adjacent in $\Gc$. 
A \emph{clique} is a complete subgraph that is not contained within another complete subgraph.
Finally, we define the complementary graph of $\Gc = (V,E)$ as the graph $\tilde{\Gc}$ with vertex set $V$ and edge set $\tilde{E}$ 
with the property that $(u,v) \in \tilde{E}$ if and only if $u \neq v$ and $(u,v) \notin E$.  

Let $\Ic_b$ be the set of pairs of indices consistent with a banded, symmetric block--circulant structure of bandwidth $n$, i.e. the set of the $(i,j)$'s which satisfies the following rules set
\begin{equation}
\begin{array}{l}
\text{for } i \in \{ 1, \dots , m \}\,, j \in \{i, \dots, mN \}\,, \; 
\text{if } |i-j| \leq m(n+1)-i \Rightarrow (i,j) \in \Ic_b \nonumber\\
\text{if } (i,j) \in \Ic_b \Rightarrow \Big((i+m)_{\text{mod mN}},(j+m)_{\text{mod mN}}\Big) \in \mathcal{I}_b\nonumber \\
\text{if } (i,j) \in \Ic_b \Rightarrow (j,i) \in \Ic_b \nonumber\\
\end{array}
\end{equation} 
(an example of this structure is shown in Figure \ref{fig:SparsityPatternN12n3}). Moreover, we will denote by 
$\Ic_b^C$ the complement of $\Ic_b$ with respect to $\{ 1, \dots, mN \} \times \{ 1, \dots, mN \}$ and by 
$\Gc = \left( \{1, \dots, mN \}, \Ic_b \right)$ the graph associated with the given entries.

As mentioned in the Introduction, for the class of problems studied by Dempster, 
the inverse of the unique completion which maximizes the entropy functional 
has the property to be zero in the complementary positions of those fixed in $\Sigma_N$.   
Thus, a rather natural procedure to compute the solution of the covariance selection problem for block--circulant matrices seems to be the following: 
iterate maintaing the elements of $\Sigmab_N$ 
{indexed by} $\Ic_b$ at the desired value (i.e. equal to the corresponding 
elements in the sample covariance matrix) while forcing the elements of $\Sigma_N^{-1}$ in $\Ic_b^C$ to zero.  
{To this aim, the following procedure can be devised.}

\begin{algorithm}                      
\caption{First algorithm (Speed and Kiiveri \cite{SK-86})}          
\label{alg:first_alg_SK}                           
\begin{algorithmic}                    
\STATE Compute all the cliques $\ct_t$ in the complementary graph $\tilde{\Gc}$, say $\left\{\ct_t, t=1,\dots, n_{\ct_t}\right\}$; 
\STATE Initialize $\Sigmab_N^{(0)} = \Rb_N$; 
\WHILE {some stopping criterion is satisfied}
\FORALL { the cliques $\ct_t$ in $\tilde{\Gc}$} 
\STATE 
\vspace{-7mm}
\begin{equation*}
		\Sigmab^{(t)}_N = \Sigmab^{(t-1)}_N + \phi\left(\Sigmab^{(t-1)}_N\right)
		\end{equation*}
\vspace{-7mm}		
\ENDFOR
\ENDWHILE
\end{algorithmic}
\end{algorithm}
{
\noindent where $\phi\left(\Sigmab^{(t-1)}_N\right)$ is the $mN \times mN$ zero matrix which equals  
\begin{equation*}
\left\{\text{diag}\left[\left((\Sigmab^{(t-1)}_N)^{-1}\right)_{\ct_t}\right]^{-1}\right\}^{-1} 
- \left[\left((\Sigmab^{(t-1)}_N)^{-1}\right)_{\ct_t}\right]^{-1} 
\vspace{2mm}
\end{equation*}
in the positions corresponding to the current clique $\ct_t$   
(given a $mN \times mN$ matrix $M$ and a set $a \subset \{1,\dots, Nm\}$, $M_{a}$ denotes the submatrix with entries $\left\{m_{ij} \, : \, i,\, j \in a\right\}$). 
Every cycle consists of as many steps as 
the cliques in the complementary graph $\tilde{\Gc}$ (the graph associated to the elements 
indexed by $\Ic_b^C$).    
At each step, only the elements in $\Sigmab_N$ corresponding to the current clique $\ct_t$ (i.e. only a subset of the entries 
indexed by $\Ic_b^C$) are modified in such a way to set the elements of $\Sigmab_N^{-1}$ in the corresponding positions to the desired zero--value. 
Throughout the iterations, the elements in  $\Sigmab_N$ 
are fixed over $\Ic_b$, while the elements of $\left(\Sigmab_N \right)^{-1}$ 
vary over $\Ic^C_b$. 

The role of $\Sigma_N$ and $\Sigma_N^{-1}$ can also be swapped, yielding an alternative procedure, 
which is the analog  of iterative proportional scaling (IPS) for contingency tables \cite{Haberman-74}. 
Let $\varphi\left(\Sigmab_N^{(t-1)}\right)$ be the $mN \times mN$ zero matrix which equals 
\begin{equation}\label{eqn:SK2_2}
\left((\Rb_N)_{c_t}\right)^{-1}-\left(\left(\Sigmab_N^{(t-1)}\right)_{c_t}\right)^{-1}
\end{equation}
in the positions corresponding to the current clique $c_t$ in $\Gc$ (the graph associated with the given entries).  
The second algorithm reads as follows. }

\begin{algorithm}[ht]                     
\caption{Second algorithm (Speed and Kiiveri \cite{SK-86})}          
\label{alg:second_alg_SK}                           
\begin{algorithmic}                    
\STATE Compute all the cliques $c_t$ in ${\Gc}$, say $\left\{c_t, t=1,\dots, n_{c_t}\right\}$; 
\STATE Initialize $\Sigmab_N^{(0)} = I_{mN}$; 
\WHILE {some stopping criterion is satisfied}
\FORALL {the cliques $c_t$ in $\Gc$} 
\STATE 
\vspace{-5mm}
\begin{equation}\label{eqn:SK2_1}
\left(\Sigmab_N^{(t)}\right)^{-1} = \left(\Sigmab_N^{(t-1)}\right)^{-1} + \varphi\left(\Sigmab_N^{(t-1)}\right)
\end{equation}
\ENDFOR
\ENDWHILE
\end{algorithmic}
\end{algorithm}
\vspace{-4mm}

Every cycle consists of as many steps as the cliques in the graph of the specified entries ${\Gc}$.
At each step, only the elements in $\Sigmab_N^{-1}$ corresponding to the current clique $c_t$ 
(i.e. only a subset of the entries 
indexed by $\Ic_b$) are modified in such a way to set the elements of $\Sigmab_N$ 
in the corresponding positions to the desired value, namely equal to the sample covariance $\Rb_N$. 
Through the iterations the elements in  $\left(\Sigmab_N\right)^{-1}$ 
are fixed over $\Ic_b^C$ while the elements of $\Sigmab_N$ 
vary over $\Ic_b$. 
  
The choice of which algorithm is to be preferred depends on the  application and is very much dependent on the number and size of the cliques in $\Gc$ and $\tilde{\Gc}$. In our setting, the complexity of the graph associated with the given entries depends on the bandwidth $n$. 
In particular, for a bandwidth $n$ not too large with respect to the completion size (which is the case we are interested in) the complexity of the graph associated with the given data $\Gc$ is far lower than the complexity of its complementary 
(which, for small $n$, is almost complete), see Figure \ref{fig:grafo_e_grafo_compl_N20m1n258}.
The execution time of the two algorithms has been compared for a completion size $N=30$ and a bandwidth $n$ varying between $2$ and $8$. 
The results are shown in Figure \ref{fig:SK1vs2_nvar_fig} and Table \ref{tab:SK1vs2_nvar_tab}. 
It turns out that for $n$ small the second algorithm (which, from now on, will be referred to as IPS) runs faster than the first, and thus has to be preferred.  

\begin{figure}[htbp!]
\centering 
\subfloat[][$\Gc$ for $n=2$] {\includegraphics[width=0.35\textwidth]{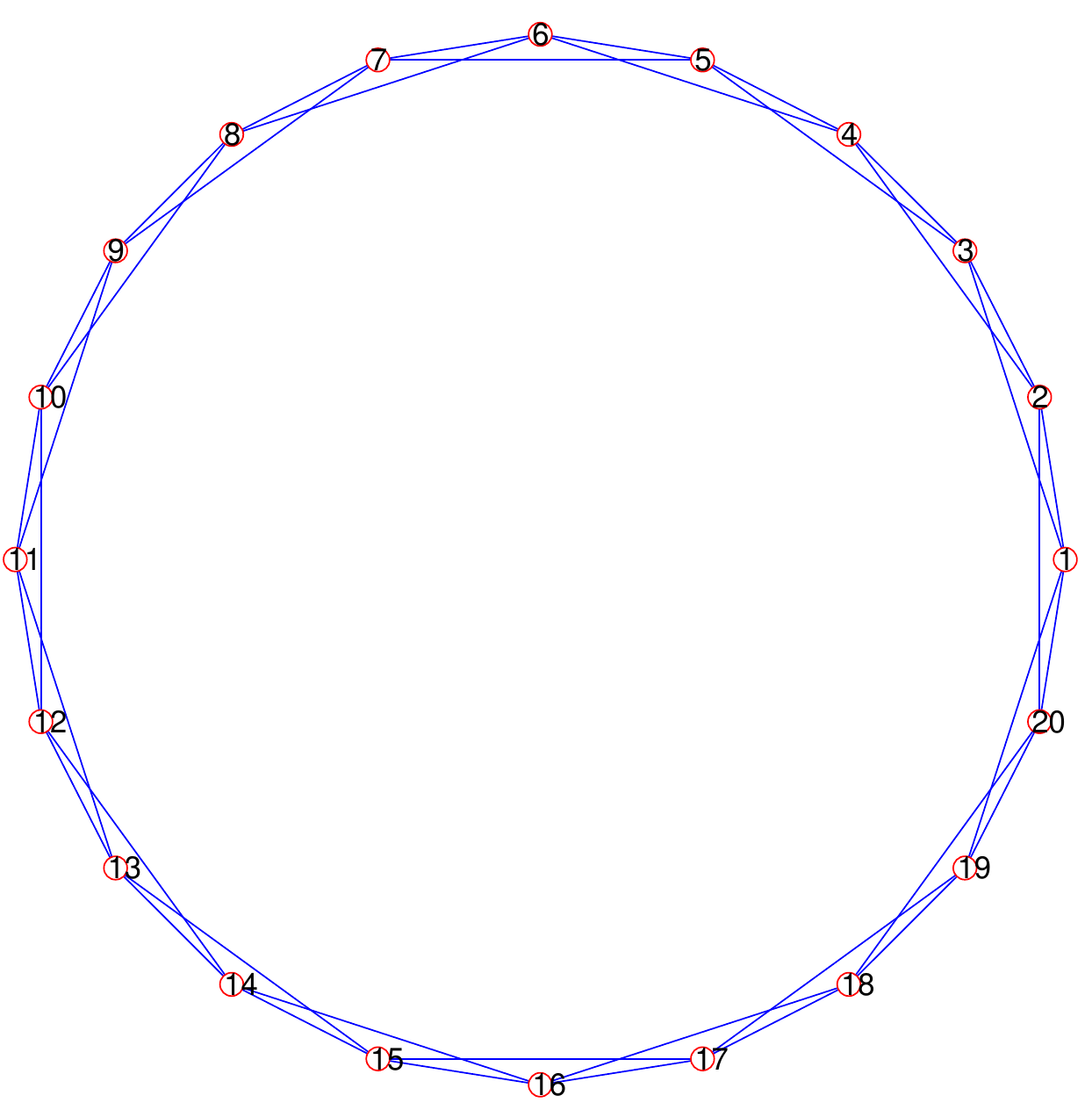}\label{fig:grafo_SK2_N20m1n2}}\hspace{1.5cm}
\subfloat[][$\tilde{\Gc}$ for $n=2$] {\includegraphics[width=0.35\textwidth]{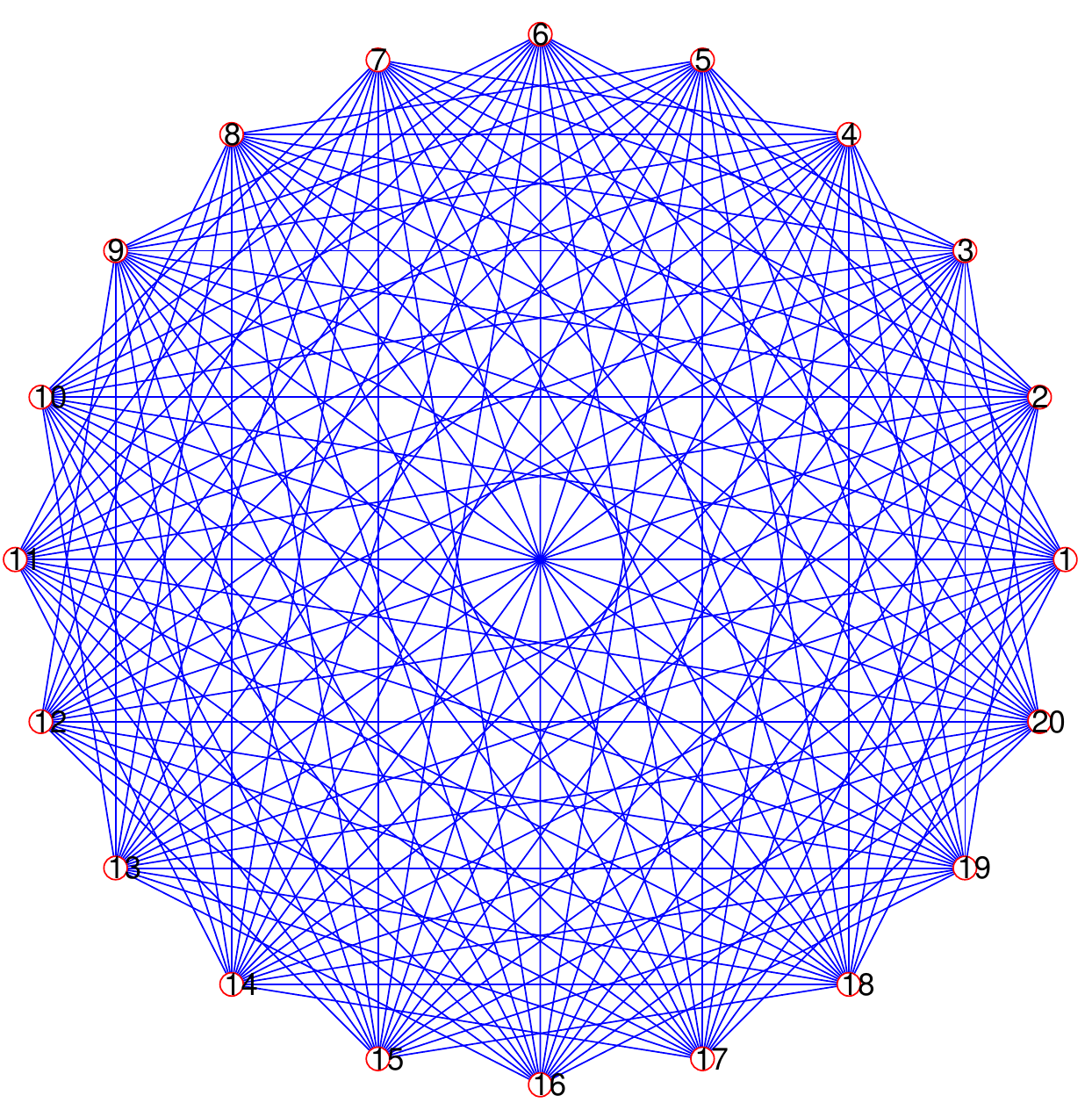}\label{fig:grafo_compl_SK1_N20m1n2}}\\
\subfloat[][$\Gc$ for $n=5$] {\includegraphics[width=0.35\textwidth]{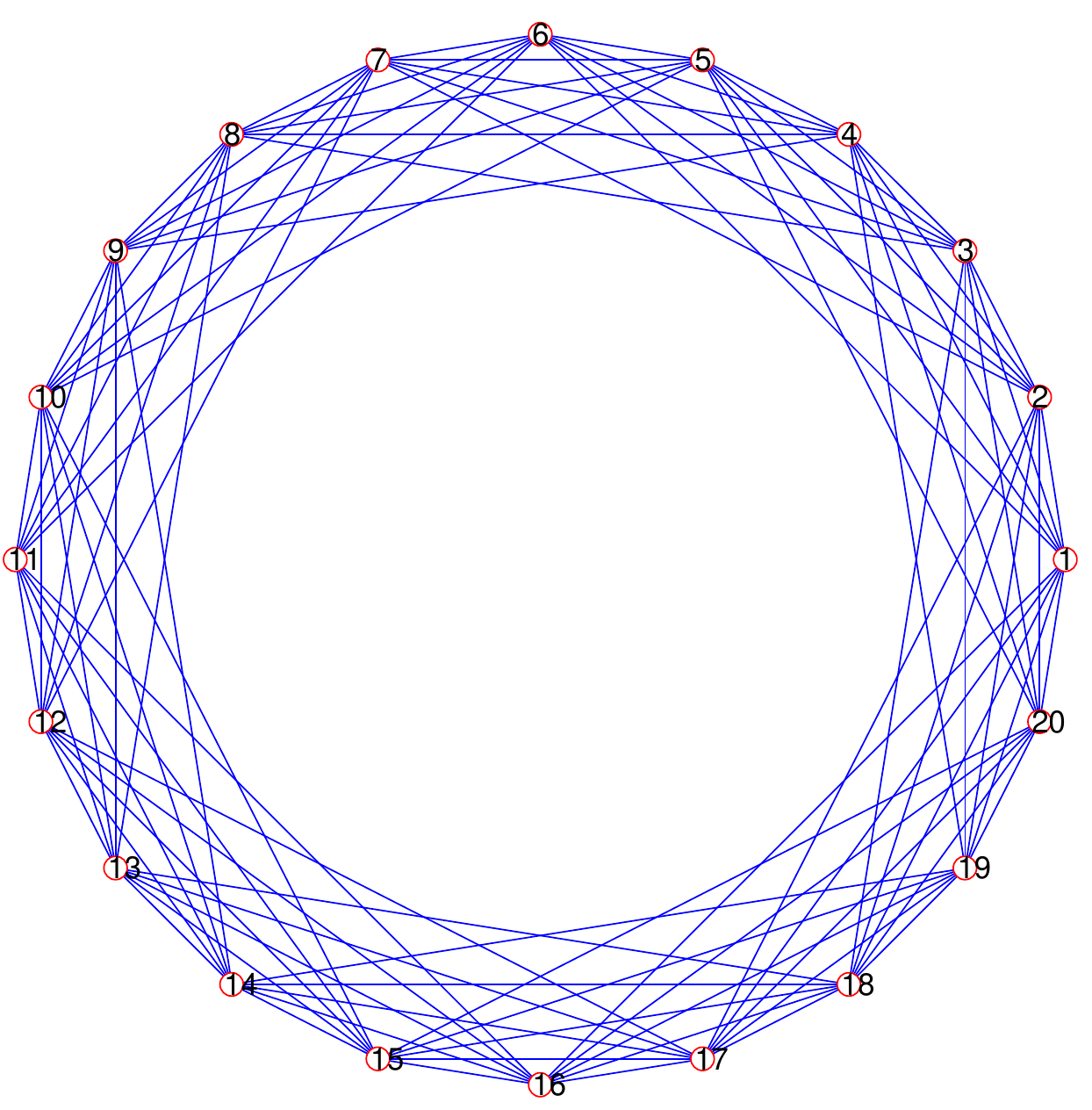}\label{fig:grafo_SK2_N20m1n5}}\hspace{1.5cm}
\subfloat[][$\tilde{\Gc}$ for $n=5$] {\includegraphics[width=0.35\textwidth]{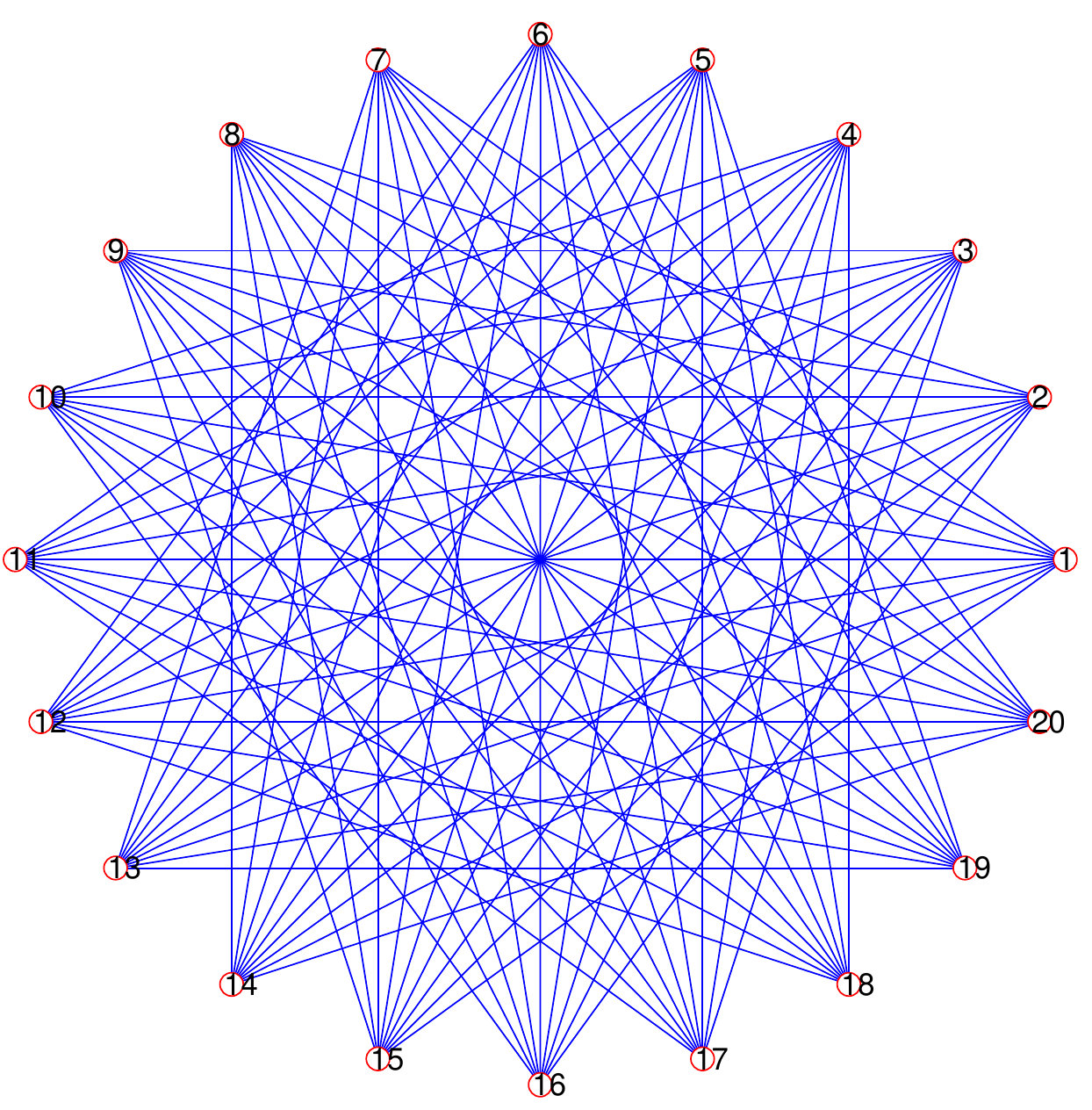}\label{fig:grafo_compl_SK1_N20m1n5}}\\
\subfloat[][$\Gc$ for $n=8$] {\includegraphics[width=0.35\textwidth]{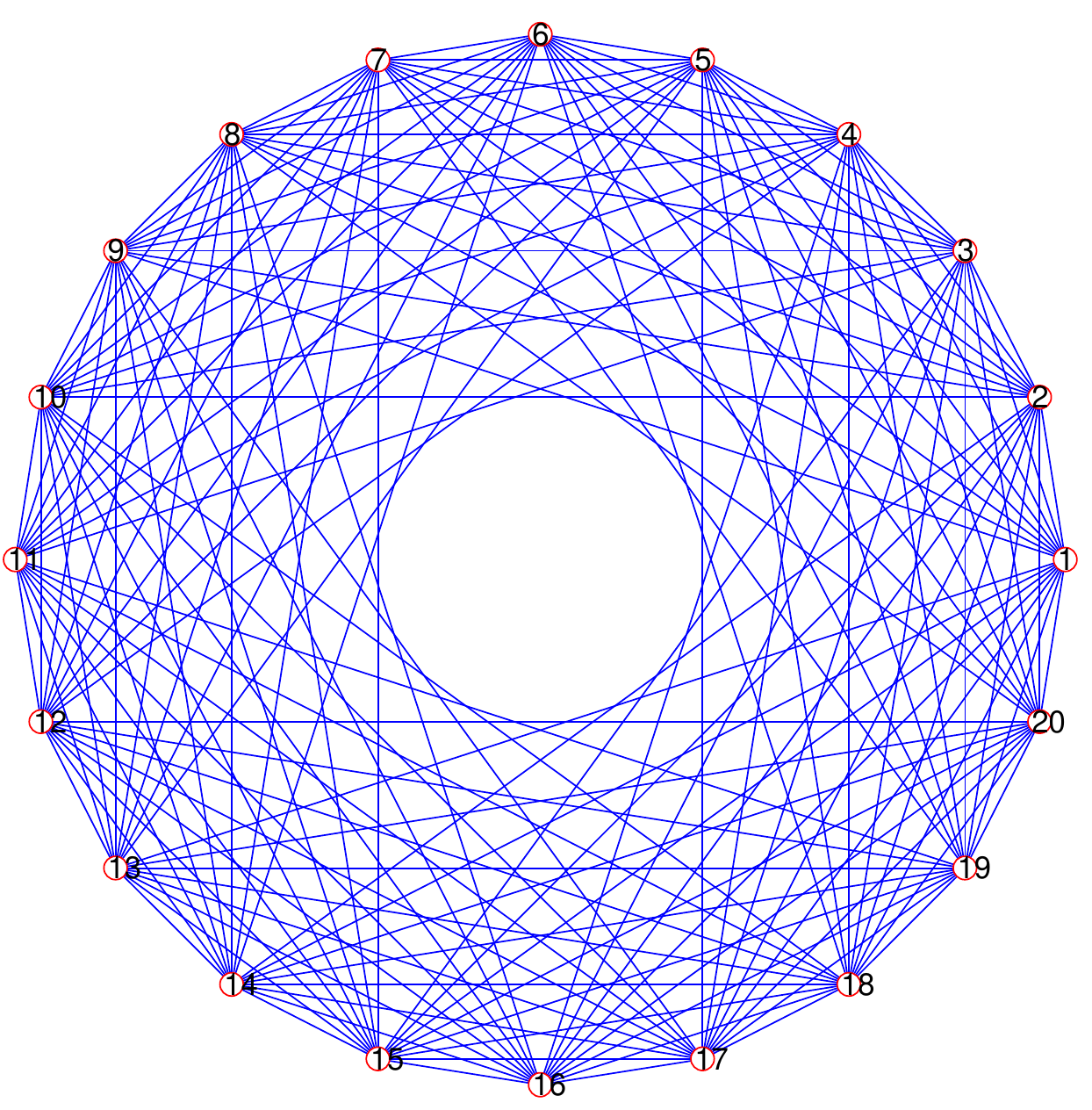}\label{fig:grafo_SK2_N20m1n8}}\hspace{1.5cm}
\subfloat[][$\tilde{\Gc}$ for $n=8$] {\includegraphics[width=0.35\textwidth]{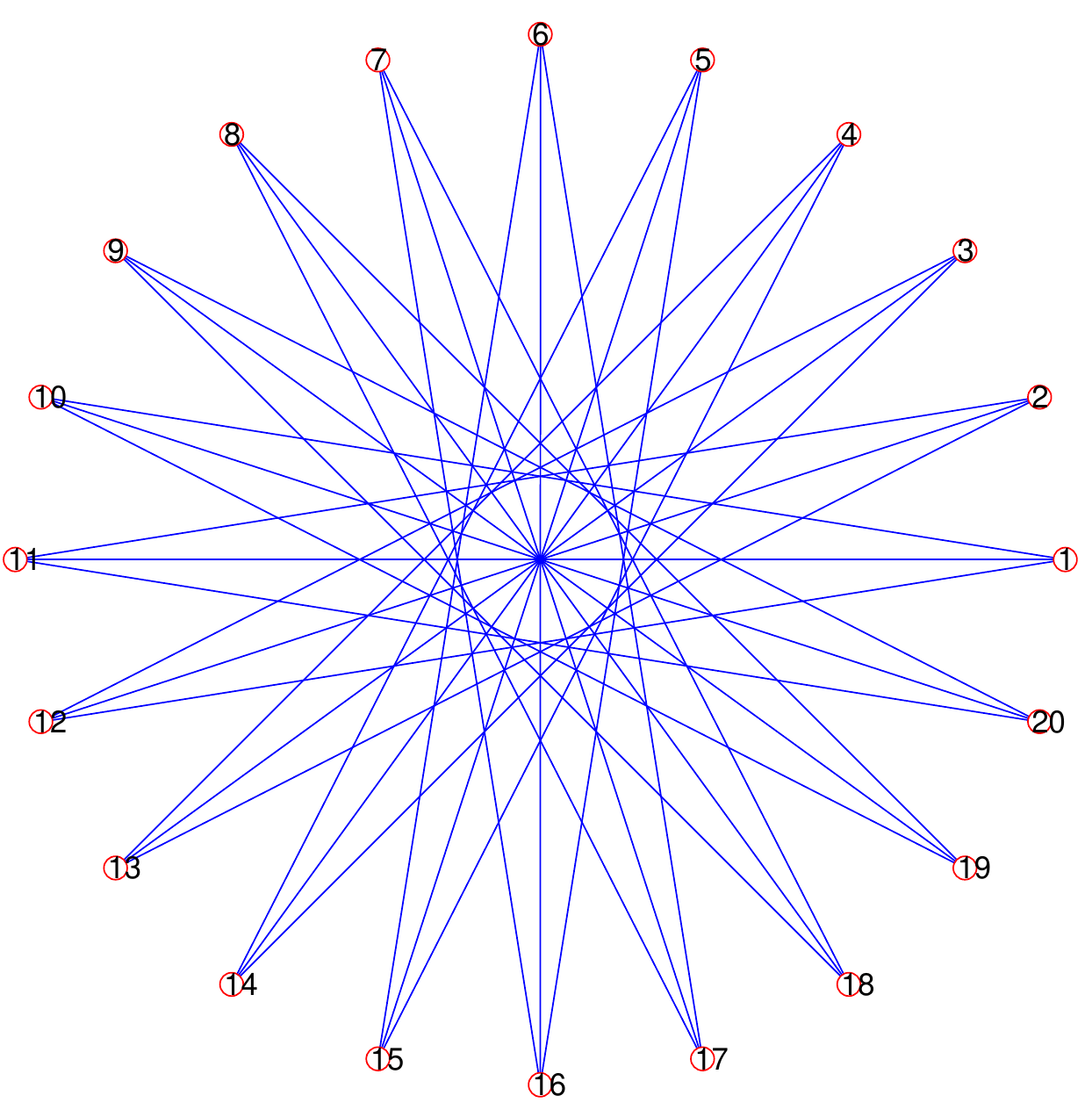}\label{fig:grafo_compl_SK1_N20m1n8}}\\
\caption{Graph ${\mathcal{G}}$ associated with the given data (on the right) and its complementary $\tilde{{\mathcal{G}}}$ (on the left) 
for $N=20$ and bandwidth $n=2,5,8$.\label{fig:grafo_e_grafo_compl_N20m1n258}}
\end{figure}

\begin{figure}[htbp!]
\centering \includegraphics[width=0.6\textwidth]{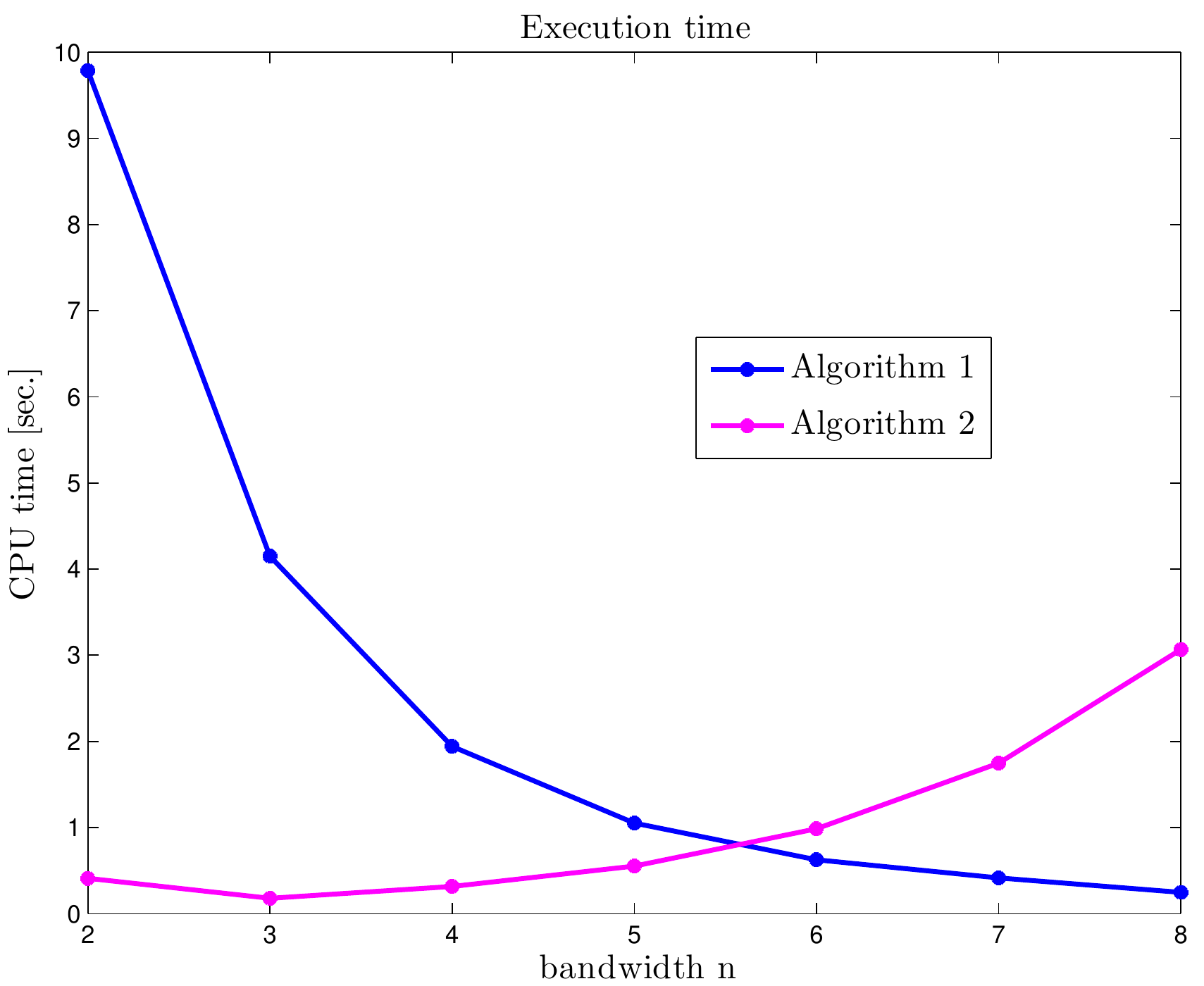}
\caption{\label{fig:SK1vs2_nvar_fig} Comparison between the execution time of the first and second algorithm for 
$N=30$, $m=1$, $n = \left\{2,\dots,8\right\}$. }
\end{figure}

\begin{table}[htbp!]
\begin{center}
\footnotesize{
\begin{tabular}{|c|c|c|c|c|}
\hline & \multicolumn{2}{|c|}{First algorithm}& \multicolumn{2}{|c|}{Second algorithm}\\
\hline
 $n$ &   \# of cl. (max. cl. size) & CPU time [s] & \# of cl. (max. cl. size) & CPU time [s]  \\
\hline \hline
$2$ & $4608(10)$ & $9.7877$  & $30(3)$  & $0.4109$  \\ 
\hline
$3$ & $2406(7)$ & $4.1515$  & $30(4)$  &  $ 0.1783$ \\ 
\hline
$4$ & $1241(6)$ & $1.9419$  & $30(5)$  &  $0.3153$ \\ 
\hline
$5$ & $706(5)$  & $1.0525$  & $30(6)$  &  $0.5535$ \\ 
\hline
$6$ & $445(4)$  & $0.6258$  & $30(7)$  &  $0.9854$ \\ 
\hline
$7$ & $295(3)$  & $0.4145$  & $30(8)$  &  $1.7477$ \\ 
\hline
$8$ & $175(3)$  & $0.2480$  & $30(9)$ &  $3.0665$  \\ 
\hline
\end{tabular}  } 
\end{center}
\caption{Execution time of the first and second algorithm for $N=30$, $m=1$, bandwidth $n = \left\{2, \dots, 8\right\}$. }
\label{tab:SK1vs2_nvar_tab}
\end{table}

\paragraph{Covariance selection via chordal embedding} 
Recently, Dahl, Vanderberghe and Roychowdhury \cite{Dahl-V-R-08} have proposed a new technique to improve the efficiency of the Newton's method for the covariance selection problem based on chordal embedding: the given sparsity pattern is embedded in a chordal one for which they provide efficient techniques for computing the gradient and the Hessian. The complexity of the method is dominated by the cost of forming and solving a system of linear equations in which the number of unknowns depends on the number of nonzero entries added in the chordal  embedding. 
For a circulant sparsity pattern, it is easy to check that the number of nonzero elements added in the chordal embedding is quite large.  
Hence, their method does not seem to be effective for our problem.

\section{Comparison of the proposed algorithm and the IPS algorithm}\label{sec:num_exp}

\begin{figure}[t!]
\begin{minipage}[c]{0.65\textwidth}
\flushleft
\includegraphics[width=0.95\textwidth]{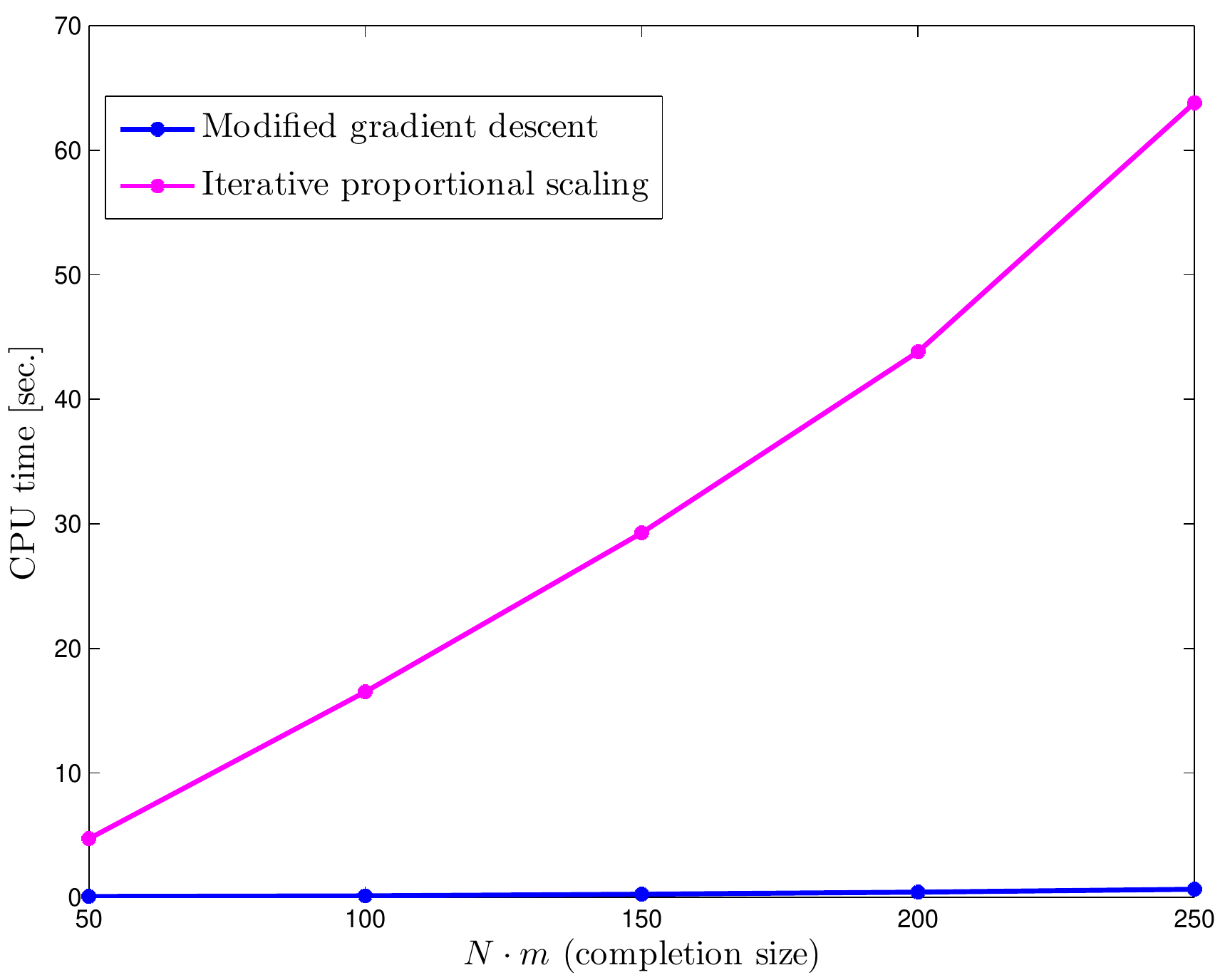}
\end{minipage}
\begin{minipage}[c]{0.05\textwidth}
\flushleft
\footnotesize{
\begin{tabular}{cc|cc}
 $N$ & $m$& IPS  & GD  \\ 
\hline  
10 & 5 & 4.7048   &  0.0767\\
20 & 5 & 16.4981   &  0.1270\\
30 & 5 & 29.2779    &  0.2504\\
40 & 5 &43.8072    &  0.4285\\
50 & 5 &63.8069     &  0.6603
\end{tabular}}
\end{minipage}
\caption{Matricial gradient descent algorithm vs. iterative proportional scaling: CPU time [in sec.] for $N=[10,20,30,40,50]$, $m=5$, 
bandwidth $n = 3$.   \label{fig:Cfr_GDt_vsSK2_Nmin10Nmax50m5n3}}
\end{figure}

The proposed gradient descent algorithm (GD) applied to the modified dual functional $\bar{J}$ has been compared to the iterative proportional scaling procedure (IPS) by Speed and Kiiveri. 
Both algorithms are implemented in Matlab. 
The Bron--Kerbosch algorithm \cite{Bron-Kerbosch-73} has been employed for finding the cliques in the graph for IPS.  
{We recall (see Section \ref{sec:MGD}) that the number of operations per iteration required by our modified gradient descent algorithm is 
cubic in the block--size $m$, as opposed to the $O([m(N-(n+1))]^3)$ operations per iteration of the IPS algorithm (see equations \eqref{eqn:SK2_2} and \eqref{eqn:SK2_1}). 
It follows that for large instances of the CME our newly proposed 
algorithm is expected to run considerably faster than the IPS algorithm. }
The execution times for different completion size $N$ and  block size $m$ are plotted in Figures \ref{fig:Cfr_GDt_vsSK2_Nmin10Nmax50m5n3} and  \ref{fig:Cfr_GDt_vsSK2_Nmin10Nmax50m10n3}. 
The simulation study confirms that our  gradient descent algorithm applied to the modified dual functional $\bar{J}$ 
outperforms the iterative proportional scaling and the gap between the two increases as $N$ increases. Moreover, the gap becomes much more evident as $m$ grows, making the gradient descent algorithm more attractive for applications where the process under observation is vector--valued ($m > 1$). 


\begin{figure}[t!]
\begin{minipage}[c]{0.65\textwidth}
\flushleft
\includegraphics[width=0.95\textwidth]{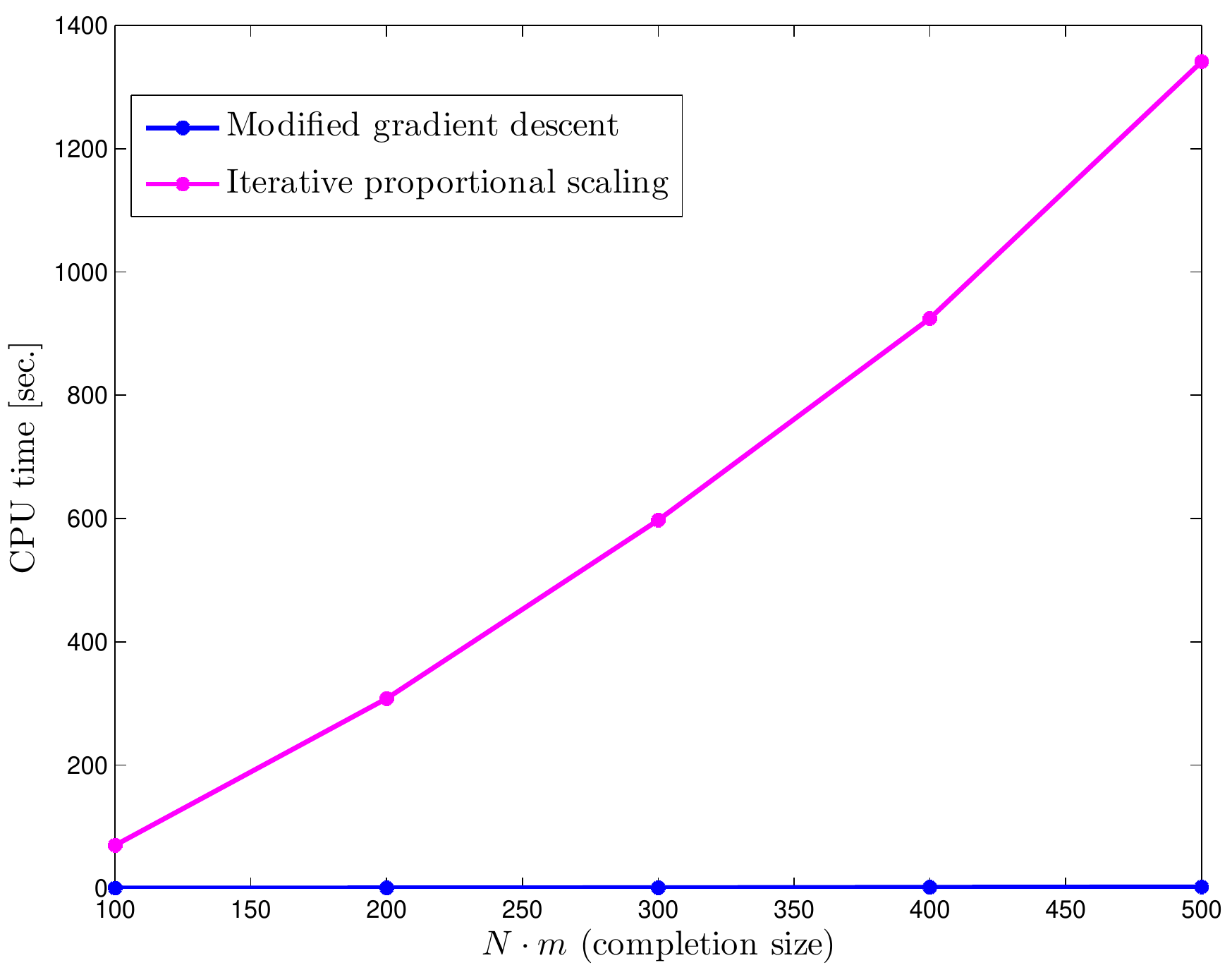}
\end{minipage}
\begin{minipage}[c]{0.05\textwidth}
\flushleft
\footnotesize{
\begin{tabular}{cc|cc}
 $N$ & $m$& IPS  & GD  \\ 
 \hline  
10  & 10 &  69.7671      &  0.1516\\
20  & 10 &  307.9596     &  0.4459\\
30  & 10 &  597.3791     &  0.8988\\
40  & 10 &  924.6431     &  1.4798\\
50  & 10 &  1341.0976    &  2.2052
\end{tabular}}
\end{minipage}
\caption{Matricial gradient descent algorithm vs. iterative proportional scaling: CPU time [in sec.] for $N=[10,20,30,40,50]$, $m=10$, 
bandwidth $n = 3$.    \label{fig:Cfr_GDt_vsSK2_Nmin10Nmax50m10n3}}
\end{figure}

\section{Conclusions} \label{sec:conclusion}
The main contribution of the present paper is an efficient algorithm to solve the maximum entropy band extension problem for block--circulant matrices. 
This problem has many applications in signal processing since it arises in connection with maximum likelihood estimation of periodic, and in particular quasi--Markov 
(or reciprocal), processes. Even if matrix completion problems have gained considerable attention in the past (think for example to the covariance extension problem for stationary processes on the integer line, i.e. for Toeplitz matrices), the maximum entropy band extension problem for block--circulant matrices has been addressed for the first time 
in \cite{CFPP-2011}. The proposed algorithm exploits the 
circulant structure and relies on the variational analysis brought forth in \cite{CFPP-2011}. 
{An efficient initialization for the proposed algorithm is provided thanks to the established relationship between the solutions of the maximum entropy problem for block--circulant and block--Toeplitz matrices. 
Further light is also shed on the feasibility issue for the CME problem.  }

\appendix
\section{Feasibility of the CME: an example}\label{app:example}
In Section \ref{sec:Markov_vs_Rec} 
we have shown that, for given $\sigma_0$ and $\sigma_1$, feasibility of the CME depends on the completion size $N$.
{The following example, aims at clarifying the interplay between feasibility and completion size $N$ 
in the simple case of unitary bandwidth and block--size 
using the characterization of the set of all positive definite completions derived in \cite{CG-2011}. }
\begin{example}
Let $\sigma_0=1$, $\sigma_1 = -0.91$. 
We want to investigate the feasibility of Problem \ref{MaxEntProbl} for $N=7$ and $N=9$,  
{i.e. we want to determine if, for $N=7$ and $N=9$, there exist a positive definite circulant completion for the partially specified matrices 
$$
\Sigmab_7 = {\rm Circ}\left( \sigma_0,\, \sigma_1,\, x,\, y,\,y,\,x,\,\sigma_1\right)\,, \qquad
\Sigmab_9 = {\rm Circ}\left( \sigma_0, \,\sigma_1,\, x, \,y,\,z,\,z,\,y,\,x,\,\sigma_1\right)\,, 
$$
where ${\rm Circ}\left( a \right)$ denotes the circulant symmetric matrix specified by its first row $a$, and $x$, $y$ and $z$ denote the unspecified entries. } 
Since 
$$
\cos \left\{\frac{(N-1)}{N}\pi \right\}=
\begin{cases} 
-0.9010  & \text{for $N=7$} \\
-0.9397  & \text{for $N=9$} 
\end{cases}\,,
$$ 
by Theorem \ref{thm:feas_bs1_bw1}, 
we expect that for $N=7$ the problem is unfeasible while for $N\geq 9$ it is expected to become feasible. 
For $N=7$ the set of all positive definite completions is delimited by the intersection of the half--planes 
indentified by 
{the eigenvalues $\Psi(w^k)$, $k=0, \dots, 6$ of $\Sigmab_7$}    
\begin{align*}
\Psi(w^0) &=-0.82 + 2x + 2y  \\
\Psi(w^1) &= \Psi(w^6) = -0.134751 - 0.445042 x -   1.80194 y \\
\Psi(w^2)&= \Psi(w^5) =1.40499 - 1.80194 x +   1.24698 y  \\
\Psi(w^3)& = \Psi(w^4)= 2.63976 + 1.24698 x -   0.445042 y. 
\end{align*} 
{(see \cite{CG-2011} for details). }  
In Figure \ref{fig:NonFeasN7_semipiani34} the intersection $\Gamma$ of the half--planes 
{$\Psi(w^0)> 0$ and $\Psi(w^1)>0$} 
is shown, 
together with the half--plane {$\Psi(w^2)>0$}. 
The intersection of these two regions is empty.  
It follows that the intersection of the four half--planes {$\Psi(w^k) > 0$}, $k=0, \dots, 3$ is also empty, as claimed.
On the other hand, if $N=9$, the eigenvalues of $\Sigmab_9$ are 
\begin{align*}
\Psi(w^0) &=-0.82 + 2 x + 2 y + 2 z \\
\Psi(w^1) &= \Psi(w^8) = -0.394201 + 0.347296 x - y - 1.87939 z \\
\Psi(w^2)&= \Psi(w^7) =0.68396 - 1.87939 x - y + 1.53209 z\\
\Psi(w^3)& = \Psi(w^6)= 1.91  -  x + 2 y - z \\
\Psi(w^4)& = \Psi(w^5) = 2.71024  + 1.53209 x - y + 0.347296 z 
\end{align*}
and the feasible set is the nonempty region shown in Figure \ref{fig:Feas_reg_N9_3D}. 
\begin{figure}[htbp!]
\centering
\includegraphics[width=0.9\textwidth]{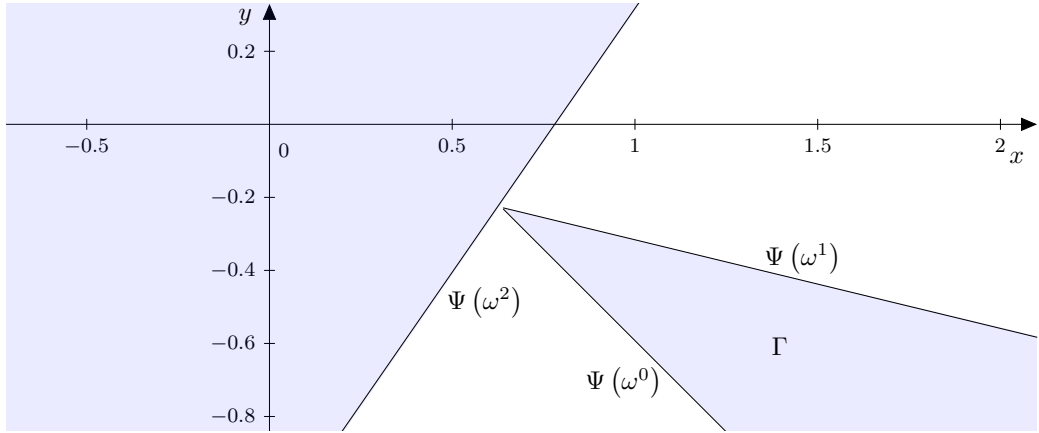}
\caption[]{Half--plane $\Psi(w^2)>0$ and intersection of the half--planes $\Psi(w^0)>0$ and $\Psi(w^1)>0$. The intersection of the two regions is empty.  }
\label{fig:NonFeasN7_semipiani34}
\end{figure}
\begin{figure}[htbp!]
\centering \includegraphics[width=0.55\textwidth]{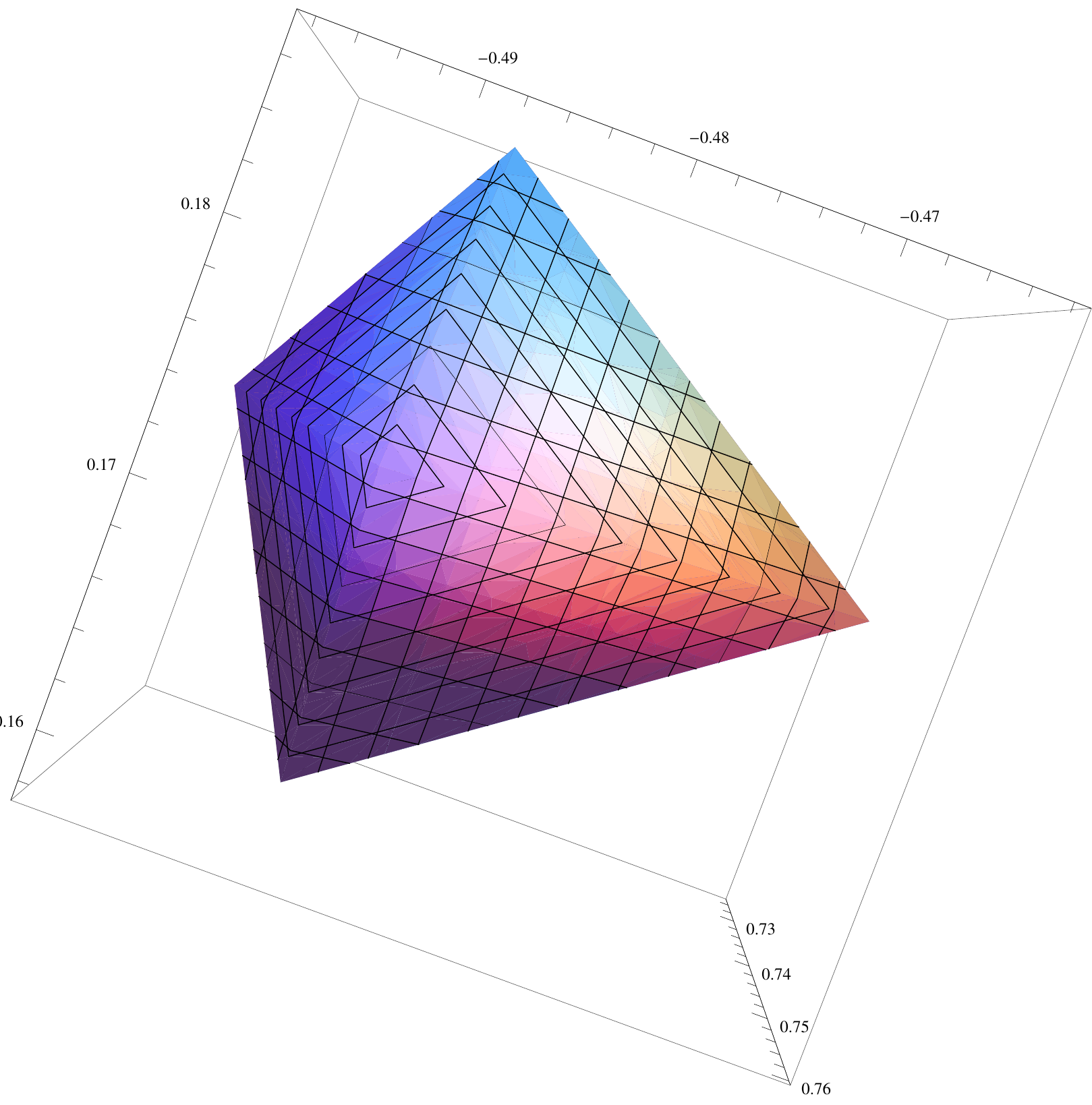}
\caption[Feasible region for $\Sigmab_N=\Circ\left\{1,-0.91,x,y,z,z,y,x,-0.91\right\}$ .]{\label{fig:Feas_reg_N9_3D} Feasible region $\{(x,y,z)\mid \Sigmab_N > 0\}$ for $\Sigmab_N=\Circ\left\{1,-0.91,x,y,z,z,y,x,-0.91\right\}$ .}
\end{figure}
\end{example}





\bibliographystyle{plain}
\bibliography{biblio_algo_v12}







\end{document}